\newtheorem{theorem}{Theorem}
\newtheorem{definition}{Definition}[section]
\newtheorem{corollary}{Corollary}[section]
\newtheorem{lemma1}{Lemma}[section]
\newtheorem{remark}{Remark}
\DeclareMathOperator{\diam}{diam}
\title[Hausdorff dimension of the set of eventually always hitting points]{\textbf{Hausdorff dimension of the set of eventually always hitting points on a self-conformal set}}
\author{Xintian Zhang}
\address{Institute of mathematics, University of Bristol}
\email{AY19811@bristol.ac.uk}
\date{November 20, 2023}
\subjclass[2020]{Primary 37C45; Secondary 28A80}
\thanks{% 
This research was supported by the Engineering and Physical Sciences Research Council (EPSRC) under grant number EP/T517872/1.  
}
\begin{document}
\maketitle
\newcommand{\floor}[1]{\lfloor #1\rfloor}
\newcommand{\bfloor}[1]{\big\lfloor #1\big\rfloor}
\newcommand{\bbfloor}[1]{\bigg\lfloor #1\bigg\rfloor}
\newcommand{\dimH}[1]{\dim_\mathscr{H}(#1)}
\newcommand{\dimL}[1]{\dim_\mathscr{L}(#1)}
\newcommand{\vl}{v_s(\E,\T)}
\newcommand{\vh}{v_e(\E,\T)}
\newcommand{\I}{\mathbf{i}}
\newcommand{\J}{\mathbf{j}}
\newcommand{\C}{\mathbf{s}}
\newcommand{\R}{\mathbf{r}}
\newcommand{\T}{\mathbf{t}}
\newcommand{\E}{\mathbf{e}}
\newcommand{\NA}{\Sigma^{\mathbb{N}}}

\begin{abstract}
    Recurrence problems are fundamental in dynamics, and for example, sizes of the set of points recurring infinitely often to a target have been studied extensively in many contexts. For example, the problem of finding the dimension for shrinking target set in an iterated function system is an active research area. In the current work, we consider a set with a finer recurrence quality, the eventually always hitting set. In a sense, the points in the intersection of an eventually always hitting set and a shrinking target set not only return infinitely often but also at a bounded rate. We study this set in the context of self-conformal iterated function systems, and compute upper and lower bounds for its Hausdorff dimension. Additionally, as an intermediate theorem, we obtain a Hausdorff dimension result for the intersection of eventually always hitting and shrinking target sets.
\end{abstract}

\section{Introduction}
The study of subsets in dynamical systems with particular recurrence properties is an active research area with a long history. The classical Poincaré recurrence theorem \cite{po} served as a starting point to the research into properties of sets of infinitely often recurring points, such as the limsup sets known as shrinking target sets. The term “shrinking target” was coined by Hill and Velani in \cite{Hill1995}, although research on this topic predates that publication. Meanwhile, the liminf counterpart to shrinking targets; that is, the sets of eventually always hitting points have not attracted attention until recently. 

We will refer to the set of eventually always hitting points in a set as the eventually always hitting set for simplicity. Both eventually always hitting sets and shrinking target sets are important and natural concepts in the study of dynamical systems. Let us define these notions. For a dynamical system $(X, T)$, let $(B_i)_i$ be a sequence of subsets of $X$. We define the \textit{shrinking target set} with respect to $(B_n)_n$ as follows:
\[R_s((B_n)_n)=\{x\in X|\forall N\in \mathbb{N},\exists n \geq N \text{ such that } T^n(x)\in B_n \} .\]
We also define the \textit{eventually always hitting set} as:
\[R_e((B_n)_n)=\{x\in X | \exists N\in\mathbb{N}, \forall n\geq N  \text{ we have } T^{m}(x)\in B_{n}   \text{ for some } m\leq n   \}.\]
At first glance, this set may not appear to be a natural counterpart to the shrinking target set. However, the structures of these two sets are strongly connected as discussed in detail in  Chapter 2.
In this paper, we limit our attention to these sets in the context of fractal geometry. Before introducing the results in the literature, we first define some notions.\\
Let $O\subset \mathbb{R}^d$ for some $d\in \mathbb{N}$. Let $f_i:O\to \mathbb{R}^d$ for all $i \in \{1,\cdots,S\}$ where $S\in \mathbb{N}$ and $S\geq  2$. If there exist a closed sets $M\subset O$ such that $f_i(M)\subset M$ for all $i\in \{1,\cdots,S\}$, we call $F=\{f_1,...,f_S\}$ as an \textit{iterated function system (IFS)}. According to a well-known result by Hutchinson \cite{Hut}, there exists a unique non-empty compact set $\Lambda \subset M$ such that $\Lambda = \bigcup_{i}^{N} f_i(\Lambda)$. This unique set is usually referred to the \textit{attractor} of the IFS. We have three classical categories of the attractors with respect to the IFS that generate them. If $f_i$'s are all similarities, we call the associated attractor a \textit{self-similar set}. In particular, if the contraction ratios are all equal in $f_i$ for $i\in \{1,\cdots,S\}$, we say the self-similar is homogeneous. If $f_i$'s are all affine maps, we call the associated attractor a \textit{self-affine set}. The third class that is often studied are the self-conformal sets. There are slight differences between the definitions of conformal IFS in the literature. Here, we adopt the definition provided in \cite{Sascha}. Fix an open set $U\subset \mathbb{R}^n$. A $C^1$-mapping $f:U\to \mathbb{R}^d$ is \textit{conformal} if the differential $f'(x):\mathbb{R}^d\to\mathbb{R}^d$ is a similarity, i.e. satisfies $|f'(x)y|=|f'(x)||y|\neq 0$ for all $x\in U$ and $y\in \mathbb{R}^d \backslash \{0\}$ and, as a function of $x$, is Hölder continuous, i.e. there exist $\alpha,c>0$ such that $|f'(x)-f'(y)|\leq c|x-y|^{\alpha}$ for all $x,y\in U$. If $f_i$ is an injective conformal mapping restricted on a bounded open convex set $U$ such that ${f_i(\overline{U})}\subset U$ where $\overline{U}$ is the closure of $U$ and $\|f'_i\|:=\sup_{x\in U}|f'_i(x)|<1 $ for all $i\in \{1,\cdots,S\}$, we say the attractor of this IFS is a \textit{self-conformal set}.

For the shrinking target problem, Shen and Wang \cite{Shen2013} calculated the Hausdorff dimension of the shrinking target set for homogeneous self-similar IFS. Hill and Velani \cite{URBANSKI2005219}, Allen and Bárány \cite{Demi} calculated the Hausdorff dimension and Hausdorff measure of the shrinking target set for self-conformal IFS respectively. With some algebraic conditions, Koivusalo and Ramírez \cite{koivusalo_ramírez_2018} calculated the Hausdorff dimension of the shrinking target set for self-affine IFS for generic targets; Bárány and Troscheit \cite{bar} refine these algebraic conditions. Bárány and Rams\cite{B_r_ny_2018} calculated the Hausdorff dimension of the shrinking target set for a explicit type of self-affine IFS, namely Bedford-McMullen carpet.

On the other hand, in the context of eventually always hitting sets much less is known. Some dynamic properties and Borel-Cantelli type measure results are discussed in \cite{GP} by Ganotaki and Persson, in \cite{Kirsebom_2020} by Kirsebom, Kunde and Persson, and in \cite{holland2021dichotomy} by Holland, Kirsebom, Kunde and Persson. Their works also focus more on the intervals instead of fractal sets. 

To the best of our knowledge, the only Hausdorff dimension results for the eventually always hitting set are provided by Bugeaud and Liao in \cite{bugeaud} and by Zheng and Wu in \cite{Zheng_2020}, where they calculated the Hausdorff dimension for the eventually always hitting set in the structure of $\beta$-expansion interval maps—a subclass of the homogeneous self-similar IFS.

In this paper, we do not only limit ourselves to the interval maps. Instead, we generalize the Hausdorff dimension result to self-conformal IFS. We calculate the upper and lower bound of the Hausdorff dimension of eventually always hitting set for self-conformal IFS. We also provide an explicit condition for when the upper and lower bound coincide, and this condition is satisfied in many common scenarios. We provide a more precise outline of our results in the next section. The comparison to Bugeaud and Liao's result will be explained in more detail in the remarks in Chapter 5 after the ideas of the proof are clear. 

\subsection{Background and notations}
\begin{flushleft}
     We say that an IFS $F$ satisfies the \textit{open set condition} if we can find an open set $O$ such that $f_i(O) \subset O$ and $f_i(O) \cap f_j(O) = \emptyset$ for all $i, j \in {1,...,S}$, where $i \neq j$. For an IFS $F$ that satisfies the open set condition, recall that $\Lambda$ is the attractor of F. We define a map $\pi$ from $\Sigma^{\mathbb{N}} := \{1,...,S\}^{\mathbb{N}}$ to $\Lambda$ by $\pi(\mathbf{i} = a_1a_2...) := \lim_{n\to \infty} f_{a_1} \circ f_{a_2} \circ \cdots \circ f_{a_n} (\Lambda)$, where $\I \in \{1,...,S\}^{\mathbb{N}}$.
\end{flushleft}

\begin{flushleft}
Let $k\in \mathbb{N}$, for $\I:=i_1i_2\cdots\in \Sigma^{\mathbb{N}}$ and $m,n \in \mathbb{N}$, we define  
\begin{itemize}[itemsep= 6pt]
 \item $\I|_n^m:=
    \begin{cases}
       a_n a_{n+1}\cdots a_{m}, &\text{ for } m\geq n\\
       \emptyset, &\text{ for } m<n
    \end{cases}$
\item $[\I|^m_n]:=\{\I|^m_n\J:\J\in \Sigma^\mathbb{N}\}$.
\item $\sigma:\Sigma^\mathbb{N} \to \Sigma^\mathbb{N}$ is the left shift map such that for $\I=i_1i_2\cdots\in \NA$, $\sigma(\I):=i_2 i_3\cdots $.
\item $f_{\I|^m_n}:=
    \begin{cases}
       f_{i_n}\circ \cdots \circ f_{i_m}, &\text{ for } m\geq n\\
       identity, &\text{ for } m<n.
    \end{cases}$
    \item $\Sigma_*$ is the collection of $\I|_i^j$ for $i,j\in\mathbb{N}$.
    \item $\|f'_{\max}\|:= \max_{(i,x)\in\{1,\cdots,S\}\times \Lambda} \{|f'_i(x)|\} $.
    \item $\|f'_{\min}\|:= \min_{(i,x)\in\{1,\cdots,S\}\times \Lambda} \{|f'_i(x)|\} $.
\end{itemize}
Notice that since $\Lambda$ is compact $\|f'_{\max}\|$ and $\|f'_{\min}\|$ are well defined, and we have $0< \|f'_{\min}\|\leq \|f'_{\max}\|< 1$.
\end{flushleft}
 
The following lemmas are classic, some relevant results can be found in \cite{Sascha,Falconer,K_enm_ki_2008}, but we recall them for the convenience of the reader. Notice that for a set $A\subset \mathbb{R}^d$, $\diam(A):= \sup_{x,y\in A}\{ d(x,y)\}$ is the diameter of the set $A$.
\begin{lemma1} \text{(Lemma 6.1 in \cite{Sascha})}
For a conformal iterated function system $\{f_i\}_1^S$ defined on $U$, there exists a bounded open
convex set $V\subset \mathbb{R}^d$ such that $f_i(V)\subset V\subset \Bar{V} \subset U$ for all $i\in \{1,\cdots,S\}$. Furthermore, if $\Lambda\subset V$ is the associated self-conformal invariant set containing at least two points, then there exists constant $K\geq 1$ such that
\begin{align}\label{6.1}
    K^{-1}\|f'_\I\|d(x,y)\leq d(f_\I(x),f_\I(y))\leq \|f'_\I\|d(x,y)
\end{align}
for all $x,y\in V $ and $\I\in \Sigma_*$,
\begin{align}\label{6.2}
    \frac{1}{\diam(\Lambda)}\diam(f_\I(\Lambda))\leq \|f'_\I\|\leq \frac{K}{\diam(\Lambda)}\diam(f_\I(\Lambda))
\end{align}
for all $\I\in \Sigma_*$, and
\begin{align}\label{6.3}
    K^{-2}\|f'_\I\|\|f'_\J\|\leq\|f'_{\I\J}\|\leq \|f'_\I\|\|f'_\J \|
\end{align}
for all $\I,\J\in \Sigma_*$.\\
Furthermore, if $\{f_i\}_1^N$ satisfies the open set condition, we can find an open set $O\subset V$ and $K' \geq 1$ such that
\begin{align}\label{6.4}
    \frac{1}{\diam(O)}\diam(f_\I(O))\leq \|f'_\I\|\leq \frac{K'}{\diam(O)}\diam(f_\I(O)).
\end{align}
for all $I\in\Sigma_*$.
\end{lemma1}

Our primary objective in this paper is to determine the Hausdorff dimension of the eventually always hitting set $R_e((B_n)_n)$ on the symbolic space projected on self-conformal sets that satisfy the open set condition. Notice that here $B_i$'s are just $[\I|_1^{a_i}]$ where $(a_n)_n$ is just a sequence of natural numbers. To provide a precise statement of the theorems and results, we introduce the following notations.

Let $\T, \E\in \Sigma^{\mathbb{N}}$, then  $\pi(\T)$ and $\pi(\E)$ are points in the self-conformal set. Let $(a_n)_n$ be a sequence of natural numbers. We introduce the notations $R_e((a_n)_n,\T)$ and $R_s((a_n)_n,\T)$ to represent the eventually always hitting set and shrinking target set on symbolic space, respectively. Specifically, we define them as follows:
\begin{align*}
R_e((a_n)_n,\T) := R_e(([\T|_1^{a_n}])_n) \quad \text{and} \quad R_s((a_n)_n,\T) := R_s(([\T|_1^{a_n}])_n),
\end{align*}
for convenience. In particular, let $v\in [0,\infty)$, we denote
\begin{align*}
R_e(v,\T) := R_e((\lfloor v n \rfloor)_n,\E) \quad \text{and} \quad R_s(v,\T) := R_s((\lfloor v n \rfloor)_n,\E),
\end{align*}
where $\lfloor \cdot \rfloor$ denotes the floor function.

Shifting our perspective to individual points, for $\T,\E\in \Sigma^{\mathbb{N}}$, we define $v_e(\E,\T)$ as the maximum eventually always hitting rate of $\E$ with respect to $\T$, and $v_s(\E,\T)$ as the maximum shrinking target rate of $\E$ with respect to $\T$. The precise definitions are as follows:
\begin{align*}
v_e(\E,\T) := \sup\{v \in \mathbb{R} : \E \in R_e(v,\T)\} \quad \text{and} \quad v_s(\E,\T) := \sup\{v \in \mathbb{R} : \E \in R_s(v,\T)\}.
\end{align*}
It can be noticed easily that 
\begin{align*}
  R_e(v,\T)=\{\E\in \NA: v_e(\E,\T)\geq v\} \text{ and }
  R_s(v,\T)=\{\E\in \NA: v_s(\E,\T)\geq v\}.
\end{align*}
Here we define a subset of $\Sigma^{\mathbb{N}}$ containing our target $\T$ that we considered in the current work.
\begin{definition}
Let's define $\T(n,j)$ to be $\T$ after replacing the $n$-th digit by $j\in \Sigma$. Then we define a subset $G$ of $\Sigma^{\mathbb{N}}$ as
\[G:=\{\T\in \NA: \exists N\in\mathbb{N} \text{ s.t. } \forall n>N \text{ we have } \forall m<n \text{ and } j\neq \T|_n^n \text{ that } \T(n,j)|_{m}^{n}\neq \T|_1^{n-m+1}\}.\]
\end{definition}
\begin{remark}
We restrict our target $\T$ to $G$ to exclude a variation of self recurrence behaviour of the target $\T$. That behaviour could cause technical issue which our current method cannot handle. However, $\T\in G$ still generalizes the results of in current literature. If there are more than 2 maps in the IFS, it is not hard to obtain that $G$ is at least uncountable.  
\end{remark}

In order to state our main theorems, we introduce the following technical notations.
\begin{definition} \label{def}
For any $\T\in G$, $\theta\in(1,\infty)$ ,$v\in (0,1)$, $M\geq \frac{1}{\theta v}$ and $p:=\max \{p\in \mathbb{N}: v M/\theta^p\geq 1\}$, we define: 
    \begin{itemize}[itemsep= 3pt]
    \item $\Sigma^*:= \{\I^*:\I^*\in \Sigma^{N} \text{ for some }N\in\NA\}$.
    \item $G(M,\T,\theta,v):=\T|_1^{\floor{\frac{v M}{\theta^p}}}\T|_1^{\floor{\frac{v M}{\theta^{p-1}}}}\cdots\T|_1^{\floor{\frac{v M}{\theta}}}\T|_1^{\floor{v M}} \T|_1^{\floor{\theta v M}}$
    \item $\Omega^+{(\T,\theta,v)}:=\limsup_{M\to \infty} \frac{\log \|f'_{G(M,\T,\theta,v)}\|}{M}.$
    \item $\Omega^-{(\T,\theta,v)}:=\liminf_{M\to \infty} \frac{\log \|f'_{G(M,\T,\theta,v)}\|}{M}.$
    \item $O(\delta)$ is a function  of $\delta$ satisfying that for some $c\in\mathbb{R}$, we have $ O(\delta) \leq c \delta $.
    \item $\Delta(\T,\theta,v):= \{\E\in\NA:v_e(\E,\T)\geq v\} \cap \{\E\in\NA:v_s(\E,\T)\geq \theta v\}$.
    \item $\Upsilon(\T,\theta,v,\delta):= \{\E\in\NA:v_s(\E,\T)\geq v\} \cap \{\E\in\NA:(\theta-\delta) v  \leq  v_s(\E,\T)\leq(\theta+\delta) v\}$.
\end{itemize}
\end{definition}
The pressure $P:[0,+\infty]\to \mathbb{R}$ is defined by 
\[P(s)=\lim_{n\to\infty}\frac{1}{n}\log\sum_{\I\in\Sigma^n}\|f'_\I\|^s.\]
Since we are considering self-conformal IFS, $P$ is a well-defined, continuous, differentiable, convex and strictly decreasing function, check Chapter 5 of \cite{Falconer} and Section 2.5 of \cite{NA2018}. Thus, it is not hard to obtain that the following quantities are well defined.
\begin{definition}\label{def-}
    We define $s^+(\T,v,\theta)$ to be the unique solution of the equation
\[P(s^+(\T,v,\theta))= -s^+(\T,v,\theta)\frac{\theta-1}{\theta-\theta v-1}\Omega^+(\T,v,\theta)\]
and $\hat{s}^+(\T,v):= \sup_{\theta\in (\frac{1}{1-v},+\infty)} s^+(\T,v,\theta)  $. In comparison, we define $s^-(\T,v,\theta)$ to be the unique solution of the equation
\[P(s^-(\T,v,\theta))= -s^-(\T,v,\theta)\frac{\theta-1}{\theta-\theta v-1}\Omega^-(\T,v,\theta)\]
and $\hat{s}^-(\T,v):= \sup_{\theta\in (\frac{1}{1-v},+\infty)} s^-(\T,v,\theta)  $.
Finally, we denote two functions $\omega^+: \Lambda\times [0,1]\to \mathbb{R}$ by 
\[
    \omega^+(\T,v):= 
\begin{cases}
    \dimH{\Lambda},& \text{if } v=0,\\
    \min\{\hat{s}^+(\T,v), \dimH{\Lambda} \}           & \text{if } v\in(0,1),\\
    0 & \text{if } v=1.
\end{cases}
\] and $\omega^-: \Lambda\times [0,1]\to \mathbb{R}$ by
\[
    \omega^-(\T,v):= 
\begin{cases}
    \dimH{\Lambda},& \text{if } v=0,\\
    \min\{\hat{s}^-(\T,v), \dimH{\Lambda} \}           & \text{if } v\in(0,1),\\
    0 & \text{if } v=1.
\end{cases}\] 
\end{definition}
Notice that $\omega^+(\T,v)$ and $\omega^-(\T,v)$ are continuous, and the proof will be given later.

\subsection{Main results}
\begin{flushleft}
   Our results are as follows: Theorem 1 stands as our main result, while Theorem 2 serves as a technical theorem that can be utilized in the proof of Theorem 1. 
\end{flushleft}

\begin{theorem} \label{theorem1}
\begin{flushleft}
Let $F$ be a conformal IFS that satisfies the open set condition and $\Lambda$ be the self-conformal set generated by $F$. For a sequence of natural numbers $(a_n)_n$ that satisfies $\lim_{n\to\infty}\frac{a_n}{n}=v$, and $\T\in G$, we have
\newline
\textit{Case }1: If $v\in [0,1],$
\[\omega^-(\T,v)\leq\dimH{\pi(R_e((a_n)_n,\T))}\leq \omega^+(\T,v)\]
\textit{Case }2: If $v\in (1,+\infty)$,
\[R_e((a_n)_n,\T)=\emptyset.\]
\textit{Case }3: If $v=+\infty$,
\[R_e((a_n)_n,\T) \text{ is countable.}\]
In particular, if for all $\theta\in (\frac{1}{1-v},\infty)$
\begin{align}\label{condition}
  \Omega^+{(\T,\theta,v)}=\Omega^-{(\T,\theta,v)},  
\end{align}
then we have $\omega^+(\T,v)=\omega^-(\T,v)$ and
\[\dimH{\pi(R_e((a_n)_n,\T))}=\omega^+(\T,v)=\omega^-(\T,v).\]
\end{flushleft}
\end{theorem}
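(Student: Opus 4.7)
My plan is to reduce Theorem~\ref{theorem1} to an intermediate dimension result for the two-rate set $\Delta(\T,\theta,v)$ (the Theorem~2 referred to in the abstract), and to handle the degenerate rates $v>1$ and $v=\infty$ by a separate combinatorial rigidity argument powered by the $G$ assumption on $\T$.

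For Cases~2 and~3, if $\E\in R_e((a_n)_n,\T)$ then for every large $n$ there is a witness $m_n\le n$ with $\E|_{m_n+1}^{m_n+a_n}=\T|_1^{a_n}$. When $v\ge 1$ the length $a_n$ is comparable to or exceeds $n$, so the witness windows at consecutive indices overlap substantially and force shift identities of the form $\T|_{d+1}^{a_n}=\T|_1^{a_n-d}$ where $d$ is a difference of consecutive witnesses. The defining property of $G$ propagates any sufficiently long such identity one character at a time, and iterating converts a single overlap into an infinite shift-self-similarity of $\T$. A careful case analysis of the admissible witness sequences then yields the two dichotomous conclusions: emptiness for $v\in(1,\infty)$ and countability for $v=\infty$.

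For Case~1 with $v\in(0,1)$, I first observe that $\E\in R_e(v,\T)$ implies $v_s(\E,\T)\ge v$, because the eventually-always-hitting witness $m_n\le n$ is itself a shrinking-target witness at depth $a_{m_n}\le a_n$, so outside a countable exceptional set on which $(m_n)$ stabilises the shrinking-target rate is at least $v$. Consequently
\[
R_e((a_n)_n,\T)\subseteq\bigcup_{k\in\mathbb{N}}\Delta(\T,\theta_k,v)\cup E
\]
for any countable dense sequence $(\theta_k)$ in $(\tfrac{1}{1-v},\infty)$ and a countable set $E$. Applying the intermediate theorem to each $\Delta(\T,\theta_k,v)$ gives $\dimH{\pi(\Delta(\T,\theta_k,v))}\le s^+(\T,v,\theta_k)$, and combining this with $\dimH{\Lambda}$ and the continuity of $\theta\mapsto s^+(\T,v,\theta)$ (a consequence of the continuity of $P$ and $\Omega^+$) delivers the upper bound $\omega^+(\T,v)$. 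For the lower bound, the intermediate theorem provides, for every admissible $\theta$ and every $v'>v$, a Moran-type subset of $\Delta(\T,\theta,v')$ of dimension arbitrarily close to $s^-(\T,v',\theta)$; since $R_e(v',\T)\subseteq R_e((a_n)_n,\T)$ whenever $v'>v$, taking the supremum over $\theta$ and passing to the limit $v'\searrow v$ by continuity yields the matching lower bound $\omega^-(\T,v)$. The boundary values $v=0$ and $v=1$ are immediate from the definitions, and under the hypothesis $\Omega^+=\Omega^-$ the two pressure equations defining $s^\pm$ coincide, so $s^+\equiv s^-$ on the relevant parameter range and the three quantities agree.

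The principal obstacle is the intermediate theorem on $\Delta(\T,\theta,v)$, where the compositely structured word $G(M,\T,\theta,v)$ plays its role. For the upper bound, any $\E\in\Delta(\T,\theta,v)$ must, at scale $M$, simultaneously realise the nested target windows $\T|_1^{\lfloor vM/\theta^j\rfloor}$ for $j=p,p-1,\ldots,0$ coming from the rate-$v$ eventually-always-hitting behaviour and the deeper window $\T|_1^{\lfloor\theta vM\rfloor}$ coming from the rate-$\theta v$ shrinking-target behaviour: this is precisely the combinatorial content encoded by $G(M,\T,\theta,v)$. A Moran cover by cylinders at scale $\|f'_{G(M,\T,\theta,v)}\|$, together with the bounded distortion property, reduces the $s$-dimensional Hausdorff sum to the defining pressure equation of $s^+(\T,v,\theta)$. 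The lower bound is the dual Moran construction that builds a Cantor tree whose surviving branches enforce simultaneously the rate-$v$ eventually-always-hitting and rate-$\theta v$ shrinking-target patterns, carrying a quasi-Bernoulli measure weighted by $\|f'_\I\|^{s^-}$ whose local dimensions are controlled via $\Omega^-(\T,v,\theta)$ and the distortion estimates against the mass-distribution principle. Extending these constructions from the homogeneous self-similar setting of Bugeaud--Liao to the genuinely self-conformal one, where contraction ratios fluctuate along an orbit and the $K$-distortion factor must be absorbed into every diameter estimate, is the principal technical burden, and is where the $G$ hypothesis must be invoked to rule out coincidental self-overlaps that would inflate the covers beyond the pressure bound.
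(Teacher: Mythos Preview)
Your overall architecture matches the paper: reduce Case~1 to an intermediate dimension theorem, handle $v>1$ and $v=\infty$ via the rigidity forced by $\T\in G$, and pass between general $(a_n)_n$ and $(\lfloor vn\rfloor)_n$ by monotonicity and continuity. The lower bound is fine: the paper likewise exhibits a Moran subset of $\Delta(\T,\theta,v)$ and uses a mass distribution argument weighted by $\|f'_\I\|^{s^-}$.

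The genuine gap is in your upper bound. You write $R_e\subseteq\bigcup_k\Delta(\T,\theta_k,v)\cup E$ and then claim the intermediate theorem yields $\dimH{\pi(\Delta(\T,\theta_k,v))}\le s^+(\T,v,\theta_k)$. That inequality is neither what the intermediate theorem asserts nor true in general. The sets $\Delta(\T,\theta,v)=\{v_e\ge v\}\cap\{v_s\ge\theta v\}$ are \emph{nested decreasing} in $\theta$, so your union collapses to a single set and the decomposition carries no information. Worse, $\Delta(\T,\theta,v)$ contains $\Delta(\T,\theta',v)$ for every $\theta'>\theta$, whence $\dimH{\Delta(\T,\theta,v)}\ge s^-(\T,v,\theta')$ for all $\theta'>\theta$; since $\theta\mapsto s^+(\T,v,\theta)$ is not monotone (it tends to $0$ at both endpoints of $(\tfrac{1}{1-v},\infty)$ and has an interior maximum), choosing $\theta'$ at that maximum defeats the claimed bound whenever $\Omega^+=\Omega^-$. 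In short, you cannot bound the dimension of a cumulative set $\{v_s\ge\theta v\}$ by the value $s^+$ attached to the single level $\theta$.

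The paper repairs this by introducing a second auxiliary set that slices by the \emph{exact} shrinking-target rate:
\[
\Upsilon(\T,\theta,v,\delta)=\{v_e\ge v\}\cap\{(\theta-\delta)v\le v_s\le(\theta+\delta)v\},
\]
proves $\dimH{\pi(\Upsilon(\T,\theta,v,\delta))}\le s^+(\T,v,\theta)+O(\delta)$ via a covering argument (this is where the word $G(M,\T,\theta,v)$ actually enters, because the two-sided constraint on $v_s$ pins down the depth of the deepest window), and only then covers $R_e(v,\T)$ by countably many $\Upsilon$'s with $\theta$ ranging over a dense set and $\delta\to 0$. The $O(\delta)$ must moreover be made uniform in $\theta$ away from the endpoint $\tfrac{1}{1-v}$, which requires a separate quantitative estimate. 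Your cover of $\Delta$ ``at scale $\|f'_{G(M,\T,\theta,v)}\|$'' implicitly assumes this two-sided control on $v_s$; without it the deepest window can be arbitrarily long and the cover you describe does not exist.
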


\begin{theorem}\label{theorem2}

Let $F$ be a conformal IFS that satisfies the open set condition and $\Lambda$ be the self-conformal set generated by $F$. For $v\in (0,1)$ and $\T\in G$, the followings holds:\newline
(1) If $\theta< \frac{1}{1-v}$, then
\[\{\E\in\NA:v_e(\E,\T)\geq v\} \cap \{\E\in \NA:v_s(\E,\T)=\theta v\}=\emptyset\]
(2) If $\theta\geq \frac{1}{1-v} $, we have for $\delta>0$ small enough,
\begin{enumerate}
    \item $\dimH{\pi(\Upsilon(\T,\theta,v,\delta))}\leq\min\{s^+(\T,v,\theta)+O(\delta),\dimH{\Lambda}\}$;
    \item $\dimH{\pi(\Delta(\T,\theta,v))}\geq \min\{s^-(\T,v,\theta),\dimH{\Lambda}\}$.
\end{enumerate}
\end{theorem}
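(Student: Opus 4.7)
The plan is to handle the three assertions—the emptiness in (1), the upper bound in (2)(1), and the lower bound in (2)(2)—separately. Throughout I introduce the matching length $\ell(m):=\max\{k:\sigma^m\E\in[\T|_1^k]\}$, note that $v_s(\E,\T)=\limsup_{m\to\infty}\ell(m)/m$, and restate $v_e(\E,\T)\geq v$ as: for all sufficiently large $n$ there exists $m_n\leq n$ with $\ell(m_n)\geq\lfloor vn\rfloor$.

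For the emptiness in (1), I argue by contradiction. If $\E$ lies in the intersection, then for every $\varepsilon>0$ one has $\ell(m)\leq(\theta+\varepsilon)vm$ for all large $m$, while each large $n$ admits a witness $m_n\leq n$ with $\ell(m_n)\geq\lfloor vn\rfloor$. The key step is to probe at $n':=m_n+\ell(m_n)$ and ask whether $m_{n'}$ can equal $m_n$. If it can, then $\ell(m_n)\geq\lfloor vn'\rfloor$ rearranges to $\ell(m_n)/m_n\geq v/(1-v)-o(1)$, contradicting $\theta<1/(1-v)$. Otherwise the witnesses split, and iterating produces a strictly increasing sequence $m_1<m_2<\dots$; the hypothesis $\T\in G$ forbids the target pattern from overlapping non-trivially with itself, preventing the witnesses from clustering and eventually forcing a peak whose rate exceeds $v/(1-v)-\varepsilon$, contradicting $v_s(\E,\T)=\theta v$.

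For the lower bound (2)(2), I build a Cantor-like subset of $\Delta(\T,\theta,v)$. Fix a rapidly increasing sequence $M_k\to\infty$. For each $k$, I force the digits of $\E$ in a window at a prescribed position to equal $G(M_k,\T,\theta,v)$, and between consecutive forced windows I leave a free block of length $L_k$ chosen to just exhaust the eventually-always-hitting coverage afforded by the previous $G$-pattern. A direct count shows that $G(M_k,\T,\theta,v)$ witnesses eventually-always-hitting at rate $v$ for all $n$ up to approximately $\theta M_k$, valid precisely because $\theta\geq 1/(1-v)$, and provides a shrinking-target hit at rate $\geq\theta v$ through its terminal block $\T|_1^{\lfloor\theta vM_k\rfloor}$; the constructed sequences therefore lie in $\Delta(\T,\theta,v)$. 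I then place a Bernoulli-type measure $\mu$ on this Cantor set by giving each admissible free word $\I^*\in\Sigma^{L_k}$ mass proportional to $\|f'_{\I^*}\|^{s}$ with $s=s^-(\T,v,\theta)$, and Dirac weight on the forced $G$-blocks. Via the mass distribution principle, the diameter estimates of Lemma 1.1, and the pressure asymptotics $\sum_{\I\in\Sigma^n}\|f'_\I\|^s\asymp e^{nP(s)}$, the exponent emerges by balancing the forced contraction $\|f'_{G(M_k,\T,\theta,v)}\|^s\asymp e^{sM_k\Omega^-(\T,\theta,v)}$ against the free-block pressure $e^{L_k P(s)}$, and this balance is exactly the equation defining $s^-(\T,v,\theta)$.

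For the upper bound (2)(1), I cover $\pi(\Upsilon(\T,\theta,v,\delta))$ efficiently. For each $\E\in\Upsilon$, infinitely many scales $M$ satisfy $\ell(M)\geq(\theta-\delta)vM$, while $\ell(m)\leq(\theta+\delta)vm$ for all large $m$. From a single such $M$ I extract nested sub-peaks at scales $\sim M/\theta^j$ by using that the condition $v_s(\E,\T)\geq v$ forces a re-witness at each lower scale, with $\T\in G$ again ruling out accidental multi-witnessing. These nested sub-peaks pin the prefix of $\E$ of length $O(M)$ to a $G(M,\T,\theta\pm O(\delta),v)$-pattern up to short controlled deviations; summing $\sum\|f'_{\I^*}\|^s$ over such admissible prefixes via the pressure estimate yields convergence for $s>s^+(\T,v,\theta)+O(\delta)$, which gives the required upper bound. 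I expect the main obstacle to sit here: extracting clean nested sub-peaks from only the two-sided bound on $\ell$, and verifying that the $O(\delta)$ correction can be absorbed by the continuity of $s^+$ as $\delta\to 0$, will likely require a pigeonhole argument on the scales $M/\theta^j$ together with the non-overlap guarantee furnished by $\T\in G$.
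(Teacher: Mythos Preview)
Your proposal follows essentially the paper's own three-part strategy: emptiness in (1) via the relation $v_s\geq v_e/(1-v_e)$, the lower bound (2)(2) via a Cantor-type subset of $\Delta$ together with the mass distribution principle, and the upper bound (2)(1) via an efficient cover built from the forced nested target-blocks, with the same balance between $\|f'_{G(M,\T,\theta,v)}\|^s$ and the pressure yielding $s^\pm$. Three minor deviations are worth flagging: the paper handles (1) by first establishing the clean inequality $v_e\leq v_s/(1+v_s)$ through maximal-match sequences $(n_k,m_k)$ rather than your probe-and-iterate dichotomy (your Case~B is not fully justified as written); for (2)(2) the paper forces single blocks $\T|_1^{m_k-n_k-1}$ at geometrically spaced positions $n_k=\lfloor a\theta^k\rfloor$, $m_k=\lfloor(\theta v+1)n_k\rfloor$ rather than entire $G$-patterns, which makes membership in $\Delta$ more transparent; and in your (2)(1) sketch the re-witnessing at each lower scale is driven by $v_e(\E,\T)\geq v$, not $v_s(\E,\T)\geq v$---the paper's stated definition of $\Upsilon$ carries the same apparent typo, but its proof explicitly uses $v_e$.
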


The proofs of our results are divided into three sections. Section 2 focuses on the symbolic space and discusses the range of $v_e$ and $v_s$, as well as their relationship, and provides a proof for Theorem 2 part (1). Section 3 serves as the main section of proofs, providing a proof for Theorem 2 part (2).
Section 4 combines proof for Theorem 1 as a consequence of Theorem 2. Our strategy involves using Theorem 2 to establish Theorem 1 with $(a_n)_n=(\lfloor nv \rfloor)_n$ for a suitable value of $v$. We then extend this result to Theorem 1 by employing Lemmas which will be introduced later. 

Section 5 introduces some explicit examples of when the condition \ref{condition} is satisfied. Some comments on current methods and possible further research directions are also included in Section 5.

\section{The influence of shrinking speed}
In this section, we introduce some lemmas that provide basic properties of the symbolic structures of these two dynamically defined sets: shrinking target sets and eventually always hitting sets. As a result, we provide a proof of Theorem 2 part (1).
\subsection[Possible values of $v_e$ and $v_s$]{Possible values of $\texorpdfstring{v_e}{ve}$ and $\texorpdfstring{v_s}{vs}$}

\begin{flushleft}
    We survey of all possible values of $v_e(\E,\T)$ and $v_s(\E,\T)$, and the meaning of these symbolic representations. For $\T\in G$, define $\Lambda_{\T}:=\{\J\T|\J\in \Sigma^*\}$, the following lemmas holds.
\end{flushleft}

\begin{lemma1}\label{2.1-}
Let $\T\in G$ and $\E \in \NA$. We have $v_e(\E,\T)\in [0,1]\cup \{+\infty\} $ and $v_e(\E,\T)=+\infty$ if and only if $\E\in\Lambda_{\T}$.
\begin{proof}
Let us first note that by definition, $v_e(\E,\T)\geq 0$. It is not hard to check that $\E\in\Lambda_{\T}$ implies $v_e(\E,\T)=+\infty$. Then, assuming $\vh=\alpha>1$, we aim to show that $\alpha=+\infty$. By definition of $v_e(\E,\T)$, for $\epsilon\in (0,\frac{\alpha-1}{2})$, there exists $N\in \mathbb{N}$ such that for all $n>N$, exists $m\leq n$ such that
\begin{align}\label{Lemma2.1.1}
    \E|_{m+1}^{m+\floor{(\alpha-\epsilon) n}}=\T|_1^{\floor{(\alpha-\epsilon)n}}.
\end{align}
We denote by $m_n$ the smallest $m$ satisfying (\ref{Lemma2.1.1}). If $\E|_{m_n+1}^{m_n+t}=\T|_1^{t}$ for all $t\in \mathbb{N}$, we can conclude that $\E\in \Lambda_{\T}$ which implies $v_e(\E,\T)=\infty$.
Otherwise, there exist exist $k\in\mathbb{N}$, such that, $\E|_{m_n+1}^{m_n+k}\neq\T|_1^{k}$. We denote by $k_{n}$ the smallest number such that $\E|_{m_n+1}^{m_n+k_{n}}\neq\T|_{1}^{k_{n}}$. Again, by definition of $\vh=\alpha$, there exists some $q\leq m_n+k_{n}$ such that $\E|_{q+1}^{q+\floor{(\alpha-\epsilon) (m_n+k_{n})}}=\T|_1^{\floor{(\alpha-\epsilon) (m_n+k_{n})}}$. Notice that $q>m_n$ and $\floor{(\alpha-\epsilon)(m_n+k_{n})}>k_{n}$ implies $\E|_{q}^{q+k_n}=\T|_{1}^{k_n}$ which contradicts to the definition of $G$. Therefore, we conclude that either $v_e(\E,\T)=+\infty$ and $\E\in \Lambda_{\T}$, or $v_e(\E,\T)\leq 1$.
\end{proof}
\end{lemma1}

Notice that Lemma \ref{2.1-} includes Case 3 of Theorem \ref{theorem1}.

\begin{lemma1}
Let $\T,\E\in \NA$. If $\E \in \Lambda_{\T}$, we have 
\[v_s(\T,\T)= \vl\leq \vh=+\infty.\]
Otherwise,
\[0\leq \vh\leq\vl\leq+\infty.\]
\end{lemma1}
\begin{proof}

If $\E\in \Lambda_{\T}$, by Lemma \ref{2.1-}, $v_e(\E,\T)=+\infty$. Thus, 
\[\vl\leq \vh=+\infty.\]
We prove the case that $v_s(\T,\T)= \vl$.
If there exists infinitely many $j\in \mathbb{N}$ and $t_j\in \mathbb{N}$  such that
\[\E|_{j+1}^{j+t_j}=\T|_1^{t_j},\] without loss of generality, we claim that $(j_n)_n$ is an increasing sequence that contains all such $t$ and for every $j_n$, $t_{j_n}$ is as large as possible. By definition, $v_s(\E,\T)=\limsup_{n\to\infty}\frac{t_{j_n}}{j_n}$. As $\I_{\E}\in \Lambda_{\T}$, there exist $M_{\E,\T}$ such that $\sigma^{M_{\E,\T}}(\E)=\T$. Therefore, for all $j_n\geq M_{\E,\T}$, 
\[\T|_{j_n-M_{\E,\T}+1}^{j_n-M_{\E,\T}+t_{j_n}}=\T|_1^{t_{j_n}}.\]
Thus, $v_s(\E,\T)=\limsup_{n\to\infty}\frac{t_{j_n}-M_{\E,\T}}{j_n}=\limsup_{n\to\infty}\frac{t_{j_n}}{j_n}=v_s(\E,\T)$. If there is no such sequence $(j_n)_n$, it not hard to obtain that $v_s(\E,\T)=v_s(\E,\T)=0$. Therefore, 
\[v_s(\T,\T)= \vl\leq \vh=+\infty.\]
If $\E  \notin \Lambda_{\T}$,
the case when $v_e(\E,\T)=0$ or $v_s(\E,\T)$ is trivial. Thus, from Lemma \ref{Lemma2.1.1} we can say that for some $\alpha\in (0,1)$, $0<\vh=\alpha\leq 1$. For any fixed $\epsilon>0$, we have $N\in\mathbb{N_\epsilon}$ such that for $n\geq N_\epsilon$, there exists $m_n\leq n$ with 
\begin{align}\label{Lemma2.2.1}
    \E|_{m_n+1}^{m_n+\floor{(\alpha-\epsilon) n}}=\T|_1^{\floor{(\alpha-\epsilon)n}}.
\end{align}
By Lemma \ref{2.1-}, we have $\E\notin \Lambda_{\T}$ which implies that there exist $k_n\geq m$ such that $\E|_{m+1}^{m+k_n}\neq \T|_1^{k_n}$.
Therefore, by taking a sequence $(n_l)_l$ such that $n_l=k_{n_{l-1}}+1$, the following holds
\[  \E|_{m_{n_l}+1}^{m_{n_l}+\floor{(\alpha-\epsilon) m_{n_l}}}=\T|_1^{\floor{(\alpha-\epsilon)m_{n_l}}}.\]
Thus, by definition of $v_s(\E,\T)$,
\[v_s(\E,\T)\geq \frac{(\alpha-\epsilon)m_{n_l}}{m_{n_l}}=\alpha-\epsilon.\]
By taking $\epsilon\to 0$, we have $v_s(\E,\T)\geq \alpha = v_e(\E,\T)$. Thus,
$ 0\leq \vh\leq\vl\leq+\infty.$

\end{proof}

\subsection{Relation between \texorpdfstring{$v_e$}{ve} and \texorpdfstring{$v_s$}{vs}}
\begin{flushleft}
   The followings show that if $v_s(\E,\T)< \infty$, we have a more precise relation between $v_e(\E,\T)$ and $v_s(\E,\T)$. We can notice that both $v_e$ and $v_s$ give some information on the asymptotic behavior of the symbolic structure. Here, we present the relation between $v_s(\E,\T)$ and $v_e(\E,\T)$. The proof of the following Lemma follows section 2.1 in \cite{bugeaud} very closely, but we include it for the reader's convenience.
\end{flushleft}

\begin{lemma1} \label{lemma:relation}
For $\E,\T\in \Lambda$ and $v_s(\E,\T)<\infty$, we have $v_s(\E,\T)\geq \frac{v_e(\E,\T)}{1-v_e(\E,\T)}$.
\end{lemma1}

\begin{proof}

We construct two non-decreasing sequences $(m'_k)_k$ and $(n'_k)_k$ satisfying: 
\begin{itemize}
    \item For all $k\in \mathbb{N}, n'_k\leq m'_k$,
    \item Every $ j\in \mathbb{N}$ satisfying $ \E|_{j}^{j} = \T|_1^1$, also satisfies $j= n'_k $ for some $k$,
    \item $\E|_{n'_k+1}^{m'_k-1}=\T|_{1}^{m'_k-n'_k-1}, 
 \E|_{n'_k}^{n'_k}\neq \T|^1_1, 
 \E|_{m'_k}^{m'_k}\neq \T|_{m'_k-n'_k}^{m'_k-n'_k} $.
\end{itemize}
Since $v_e(\E,\T)>0$, we have 
\[\limsup_{k\to\infty}(m'_k-n'_k)= +\infty\]
Therefore, we can inductively define subsequences $(m_k)_k$ and $(n_k)_k$ out of $(m{'}_k)_k$ and $(n'_k)$ that satisfy $(m_k-n_k)_k$ being increasing sequence. To be more precise, we define $m_1=m'_1$ and $n_1=n_1'$. Assume for $k\geq 1$, we have defined $m_k=m'_{j_k}$ and $n_k=n'_{j_k}$, then we define $j_{k+1}:=\min\{j>j_k: m'_j-n'_j > m_k-n_k\}$. Therefore, by defining $n_{k+1}=n'_{j_{k+1}}$ and $m_{k+1}=m'_{j_{k+1}}$, we finished the inductive construction. Notice that $\E|_{n_{k'}}^{m_{k'}}$ is the sequence of maximized subwords coinciding with the target word $\T|_1^{m_{k'}-n_{k'}-1}$. On the other hand, we take subsequences $(k)_k$ from $(k')_{k'}$ to guarantee that $m_k-n_k$ increases with respect to $k$. Thus, we have by the definition of $G$
\begin{align}
  v_s(\E,\T)&=\limsup_{k\to \infty}\frac{m_k-n_k}{n_k}=\limsup \frac{m_k}{n_k}-1 \label{2.1}\\
  v_e(\E,\T)&=\liminf_{k\to+\infty}\frac{m_k-n_k}{n_{k+1}}\leq\liminf_{k\to \label{2.2} +\infty}\frac{m_k-n_k}{m_k}=1-\limsup_{k\to +\infty}\frac{n_k}{m_k}.
\end{align}
Therefore, by \[(\limsup_{k\to+\infty}\frac{n_k}{m_k})\cdot(\limsup_{k\to+\infty}\frac{m  _k}{n_k})\geq  1\] 
and equation (\ref{2.1}) and (\ref{2.2}) we have 
\begin{align}
  v_e(\E,\T)\leq 1-\frac{1}{1+v_s(\E,\T)}=\frac{v_s(\E,\T)}{1+v_s(\E,\T)}.\label{2.3}
\end{align}
We assumed that $v_s(\E,\T)$ is finite, and that implies $v_e(\E,\T)<1$. Hence, (\ref{2.3}) implies
\[v_s(\E,\T)\geq \frac{v_e(\E,\T)}{1-v_e(\E,\T)}.\qedhere \]
\end{proof}

The sequences $(n_k)_k$ and $(m_k)_k$ defined in Lemma \ref{lemma:relation} will be used throughout this paper. It follows from the definition that the sequences $(m_k)_k$ and $(n_k)_k$ are unique if $\E,\T$ are given. We end this section with the proof of Theorem 2 part (1): 
\begin{proof}[Proof of Theorem 2 part (1)]
   Assume there exist $\E_0\in \NA$ such that $\pi(\E_0)\in \{\E\in\NA:v_e(\E,\T)\geq v\} \cap \{\E\in\NA:v_s(\E,\T)=\theta v\}$, for some $\theta< \frac{1}{1-v}$. Lemma \ref{lemma:relation} implies that
\[v\leq v_e(\E_0,\T) \leq \frac{\theta v}{1+\theta v}.\]
Rearranging it results in $\theta\geq \frac{1}{1-v}$. Hence $\{\E\in\NA:v_e(\E,\T)\geq v\} \cap \{\E\in\NA:v_s(\E,\T)=\theta v\}$ is non-empty only when $\theta<\frac{1}{1-v}$. 
\end{proof}
\begin{remark}
     If we change our target balls to annulus in the eventually always hitting set, we can notice from equation (\ref{2.2}) and equation (\ref{2.1}) that after the $n$-th return, the next return needs to be soon enough which depends on how close the current return to the target point. This is the reason why we introduce the eventually always hitting property as a frequency restriction to shrinking target set. Even through in the original form they may seem irrelevant. 
\end{remark}

\section{Hausdorff dimension of the intersections: Proof of Theorem 2}
We provide the proof of Theorem 2 part (2)1 and (2)2 in this section.
\subsection{Ideal covering: upper bound}

\begin{flushleft}
 The construction of a cover of $\pi(\Upsilon(\T,\theta,v,\delta))$ is introduced in this subsection. The Hausdorff dimension is then being estimated and used as an upper bound of $R_e(\T,v)$ for proper $\theta$.    
\end{flushleft}

The following lemma provides an answer to the question: if $\E\in \Upsilon(\T,\theta,v,\delta)$, what can we say about the symbolic structure of $\E$?
\begin{lemma1}
Let $\T\in G$, $v\in(0,1)$, $\theta\geq \frac{1}{1-v}$, $\delta>0$ small and $\E\in \Upsilon(\T,\theta,v,\delta)$. For $\epsilon>0$, there exists a $\hat{k}\in\mathbb{N}$ and an increasing sequence $(k_i)_i$ such that for all $k_i\geq \hat{k}$ in that sequence, we have
\begin{align}\label{eq:13.1}
    \sum_{i=1}^{k_i} m_j-n_j-2 \geq n_{k_i}\left(\frac{(\theta+\delta) v }{\theta+\delta-1}-\epsilon'\right),
\end{align}
where $\epsilon'= \epsilon\left(\frac{(\theta+\delta)^2+1}{(\theta+\delta-1)^2}+1\right).$

\begin{proof}
 Recalling from definition \ref{def}, we have $v_e(\E,\T)\geq v$ and $ (\theta-\delta)v \leq v_s(\E,\T)\leq (\theta+\delta)v$. Recalling the subsequences $m_k$ and $n_k$ from Lemma \ref{lemma:relation}, we assume the axiom of choice and take two subsequences of $(m_k)_k$ and $(n_k)_k$ along which the supremum of (\ref{2.1}) is obtained and define them as $(m_{k_i})_i$ and $(n_{k_i})_i$ respectively. Without loss of generality, let $0<\epsilon<\frac{\theta v}{2}$, $0<\delta<\theta$, by (\ref{2.1}) and (\ref{2.2}), there exists $k(\epsilon,\delta,\E,\T)$ depending on $\epsilon, \delta$ and $\T$, such that for all $k>k(\epsilon,\delta,\E,\T)$ and $k_i> k(\epsilon,\delta,\E,\T)$ in the sequence $(k_i)_i$, the following inequalities hold,
\begin{align}\label{eq:9}
    m_{k}-n_{k}\leq ((\theta+\delta)v+\epsilon)n_{k},
\end{align}
\begin{align}\label{eq:10}
    m_k-n_k\geq (v-\epsilon)n_{k+1},
\end{align}
\begin{align}\label{eq:9+}
    m_{k_i}-n_{k_i}\geq ((\theta-\delta)v-\epsilon)n_{k_i}.
\end{align}
Combining (\ref{eq:9}) and (\ref{eq:10}) implies
\begin{align}\label{eq:11}
   ((\theta+\delta)+\epsilon)n_k \geq (v-\epsilon)n_{k+1}. 
\end{align}
Thus, letting $j\in \{1,...,k-k(\epsilon,\delta,\E,\T)\}$, $(m_k)_k$, the following inequality holds, 
\begin{align}\label{eq:12}
   m_{k-j}-n_{k-j}\geq {\left(\frac{v-\epsilon}{(\theta+\delta) v+\epsilon}\right)^{j-1}}(v-\epsilon)n_k.
\end{align}
Without loss of generality, let $\epsilon<\frac{1}{2}v$. From (\ref{eq:9}), we have 
\begin{align}\label{eq:9--}
   m_k- n_k\geq (v-\epsilon)n_{k+1}>\frac{1}{2}v n_{k+1} > \frac{1}{2}v n_k .\end{align}
Therefore, by the definition of $G$,
\begin{align}
    n_{k+1}>m_k=n_k+(m_k-n_k)>( \frac{1}{2}v+1)n_k
\end{align}
which implies that $n_{k}$ is growing at least exponentially with respect to $k$. From (\ref{eq:9}) and (\ref{eq:9--}), $m_{k}$ is also growing at least exponentially. Thus, $\lim_{i\to \infty}\frac{k_i-k(\epsilon,\delta,\E,\T)}{n_{k_i}}=0$ and hence there exists $\Bar{i}\in\mathbb{N}$ such that for $i>\Bar{i}$, $\frac{k_i-k(\epsilon,\delta,\E,\T)}{n_{k_i}}<\frac{\epsilon}{8}$. We denote $\Bar{k}= \max\{k_{\Bar{i}},k(\epsilon,\delta,\E,\T)\}$, for $k_i\geq \Bar{i}$ in sequence $(k_i)_i$,
\begin{align}\label{eq:13}
    &\sum_{j=1}^{k_i} m_j-n_j-2\nonumber\\
    &\geq(v-\epsilon)n_{k_i}\left(1+\frac{v-\epsilon}{(\theta+\delta) v+\epsilon}+(\frac{v-\epsilon}{(\theta+\delta) v+\epsilon})^2+\cdots+(\frac{v-\epsilon}{(\theta+\delta) v+\epsilon})^{\Bar{k}}\right)- 2(k_i-k(\epsilon,\delta,\E,\T))\nonumber
    \\ &\geq n_{k_i}\frac{(v-\epsilon)((\theta+\delta) v+\epsilon)}{(\theta+\delta) v-v+2\epsilon} -n_{k_i}\sum_{j=k_i-\Bar{k}+1}^{\infty}(\frac{v-\epsilon}{(\theta+\delta) v+\epsilon})^i-2(k_i-k(\epsilon,\delta,\E,\T))\nonumber\\
    &\geq n_{k_i}\left(\frac{(\theta+\delta) v }{\theta+\delta-1}\right)-n_{k_i}\biggl(\frac{(v-\epsilon)((\theta+\delta) v+\epsilon)}{(\theta+\delta) v-v+2\epsilon}\Bigl(\frac{v-\epsilon}{(\theta+\delta)v+\epsilon}\Bigr)^{k_i-\Bar{k}+1}+\frac{(\theta+\delta) v }{\theta+\delta-1}\nonumber \\
    &\text{  }\text{ }-\frac{(v-\epsilon)((\theta+\delta) v+\epsilon)}{(\theta+\delta) v-v+2\epsilon}+2\frac{k_i-k(\epsilon,\delta,\E,\T)}{n_{k_i}}\biggr)\nonumber\\
    &\geq n_{k_i}\left(\frac{(\theta+\delta) v }{\theta+\delta-1}-\Bigl(\frac{1}{\theta+\delta}\Bigr)^{k_i-k(\epsilon,\delta,\E,\T)} - \epsilon\left(\frac{(\theta+\delta)^2+1}{(\theta+\delta-1)^2}+\frac{1}{2}\right)-2\frac{k_i-k(\epsilon,\delta,\E,\T)}{n_{k_i}}\right).
\end{align}
Since $\theta+\delta>\frac{1}{1-v}>1$, there exists $\hat{i}\in \mathbb{N}$ such that for all $i>\hat{i}$, we have $(\frac{1}{\theta+\delta})^{k_i- k(\epsilon,\delta,\E,\T)}<\frac{\epsilon}{4}$. Denote $\hat{k}=\max\{k_{\hat{i}},k_{\Bar{i}}\}$, for $k_i\geq \hat{k}$ together with equation (\ref{eq:13}) imply
\[\sum_{i=1}^{k_i} m_j-n_j-2 \geq n_{k_i}\left(\frac{(\theta+\delta) v }{\theta+\delta-1}-\epsilon'\right).\qedhere \]
\end{proof}
\end{lemma1}
Fix $(m_k)_k,(n_k)_k,\T,\theta,v,\delta,\epsilon',N,j,k_j$. Let us denote
\begin{align}
    &\mathcal{A}_{(m_k,n_k)_k}:=\{\E\in \NA: \E|_{n_k+1}^{m_k-1}=\T|_1^{m_k-n_k-1} \text{ for all }k\in\mathbb{N}\}.
\end{align}
For all $\E\in \mathcal{A}_{(m_k,n_k)_k} \cap \Upsilon(\T,\theta,v,\delta) $, the symbolic structure of $\E|_1^{m_{k_i}}$ can be illustrated as 
\begin{align} \label{structure0}
    \cdots\T|_1^{m_{k_1}-n_{k_1}-1}\cdots \T|_1^{m_2-n_2-1} \cdots \cdots \cdots \T|_1^{m_{k_j-1}-n_{k_j-1}-1}\dots \T|_1^{m_{k_i}-n_{k_i}-1}.
\end{align}
Let us denote
\begin{align}
&E_{k^*}{(\epsilon',\theta, v,\delta)}:=\Big\{\bigcup_{(m_k,n_k)_k}\mathcal{A}_{(m_k,n_k)_k}: (\ref{eq:13.1}) \text{ holds for all } k_i\geq k^* \Big\},\\
&L(j,k_j,N):=\{\E\in E_{k^*}{(\epsilon',\theta, v,\delta)}:n_{k_j}=N\}.
\end{align}
\begin{lemma1} \label{lemma3.2}
    Let $t:=N-\floor{N(\frac{(\theta+\delta) v }{\theta+\delta-1}-\epsilon')}$ and
    \begin{equation}
      \I^*_{\text{fixed}}:= \T|_1^{\floor{(\frac{v-\epsilon}{(\theta+\delta)v+\epsilon})^{k^*-k'} (v-\epsilon)N}-1} \cdots \T|_1^{\floor{(\frac{v-\epsilon}{(\theta+\delta)v+\epsilon}) (v-\epsilon)N}-1} \T|_1^{\floor{((\theta-\delta)v-\epsilon)N}}. \tag{$*$}  
    \end{equation}
    The Hausdorff-$s$ measure of $\pi(E_{k^*}{(\epsilon',\theta, v,\delta)})$ is bounded from above by 
    \[ \sum_{N=1}^{\infty} (c\log N)^2 N^{c\log N} \diam(\Lambda)K^{4(\floor{c\log N}-k')+3} \|f'_{\I^*_{\text{fixed}}}\|^s\sum_{\I\in \Sigma^t} \|f'_{\I}\|^s.\]
    In addition,
    \[   \dimH{\Upsilon(\T,\theta,v,\delta)}\leq \Bar{s}\]
where $\Bar{s}$ is the unique solution of
\begin{align*}
      s[\Omega^+(\T,v,\theta)+\log\frac{1}{\|f'_{\min}\|}(\delta(1+\frac{v}{(\theta-1)^2}))]+\frac{(\theta+\delta)(1-v)-1}{\theta+\delta-1}P(s)=0. 
\end{align*}

\end{lemma1}
\begin{proof}
 Notice that we can write $E_{k^*}{(\epsilon',\theta, v,\delta)}$ as a union of countable sets as 
\begin{align}\label{eq:14-}
 E_{k^*}{(\epsilon',\theta, v,\delta)}=\bigcup_{(j,k_j,N)\in \mathbb{N}^3} L(j,k_j,N).
\end{align}
Let $L(j,k_j,N):=\{\E\in E_{k^*}{(\epsilon',\theta, v,\delta)}:n_{k_j}=N\}$ be a non-empty set, from (\ref{eq:9+}), we have a natural cover of it which is the projection of cylinders of length $\floor{((\theta+\delta)v-\epsilon)N}+N$. In these cylinders, there are $k_j$ predefined terms of the form $\T|_1^{m_k-n_k-1}$ where $k\in \{k_1,\cdots,k_j\}$ . Therefore, we can notice that these cylinders all have symbolical form as
\begin{align} \label{structure}
    \cdots\T|_1^{m_{k_1}-n_{k_1}-1}\cdots \T|_1^{m_2-n_2-1} \cdots \cdots \cdots \T|_1^{m_{k_j-1}-n_{k_j-1}-1}\dots \T|_1^{\floor{((\theta-\delta)v-\epsilon)N}}.
\end{align}
From (\ref{eq:12}), we have $m_{k_j-i}-n_{k_j-i}\geq (\frac{v-\epsilon}{(\theta+\delta)v+\epsilon})^{i-1} (v-\epsilon)n_{k_j}$ which indicates that the prefixed term $\T|_1^{m_{k_j-i}-n_{k_j-i}}$ has length at least $\floor{(\frac{v-\epsilon}{(\theta+\delta)v+\epsilon})^{i-1} (v-\epsilon)n_{k_j}}-1$ for $i\in \{1,\cdots,k_j-k'\}$. Therefore, the union of cylinders of the form
 \begin{align} \label{form}
       \cdots \T|_1^{\floor{(\frac{v-\epsilon}{(\theta+\delta)v+\epsilon})^{k_j-k'} (v-\epsilon)N}-1}  \cdots  \T|_1^{\floor{(\frac{v-\epsilon}{(\theta+\delta)v+\epsilon}) (v-\epsilon)N}-1}\dots \T|_1^{\floor{((\theta-\delta)v-\epsilon)N}}
 \end{align} 
 will be a cover of $\{E_{k^*}{(\epsilon',\theta, v,\delta)}:n_{k_j}=N\}$.
 Let $\I\in \Sigma^*$ be a cylinder of length $\floor{((\theta+\delta)v-\epsilon)N}+N$ of form (\ref{form}). We denote the fixed terms $\I_{\text{fixed}}= \T|_1^{\floor{(\frac{v-\epsilon}{(\theta+\delta)v+\epsilon})^{k_j-k'} (v-\epsilon)N}-1} \cdots \T|_1^{\floor{(\frac{v-\epsilon}{(\theta+\delta)v+\epsilon}) (v-\epsilon)N}-1} \T|_1^{\floor{((\theta-\delta)v-\epsilon)N}}$ and let $\I_{\text{free}}$ to be the remaining terms combined. Then, by (\ref{6.2}) and (\ref{6.3}), we have 
 \begin{align}
    \diam(f_{\I}(\Lambda))\leq   \diam(\Lambda)  K^{4(k_j-k')+3} \|f'_{\I_{\text{fixed}}}\|\|f'_{\I_{\text{free}}}\|.
 \end{align}
 For a fixed $(n^*_k,m^*_k)_k$, we have the Hausdorff $s$-measure of $\pi(\{\E\in E_{k^*}{(\epsilon',\theta, v,\delta)}:n_{k_j}=N ,n_i=n_i^*, m_i=m_i^*\})$ for all $i\in \{1,\cdots,k_j\}\}$ is bounded by
 \begin{align}
  \diam(\Lambda)^s K^{4(k_j-k')+3} \|f'_{\I_{\text{fixed}}}\|^s\sum_{\I\in \Sigma^t} \|f'_{\I}\|^s   
 \end{align}
 where $t$ is the number of free terms and by (\ref{eq:13.1}) is equal to 
 \[N-\floor{N(\frac{(\theta+\delta) v }{\theta+\delta-1}-\epsilon')}.\]
 Since $n_k$ is growing at least exponentially with respect to $k$, we have a constant $c\in\mathbb{R}$ such that $k\leq c\log n_k$. Therefore, the Hausdorff $s$-measure of $\pi( L(j,k_j,N))=\pi(\cup_{(m_k^*,n_k^*)\in (\mathbb{N}^N)^2} \{\E\in E_{k^*}{(\epsilon',\theta, v,\delta)}:n_{k_j}=N ,n_i=n_i^*, m_i=m_i^*\})$ for all $i\in \{1,\cdots,k_j\}\}$ is bounded by
  \begin{align}\label{upper1}
  N^{c\log N} \diam(\Lambda)K^{4(k_j-k')+3} \|f'_{\I_{\text{fixed}}}\|^s\sum_{\I\in \Sigma^t} \|f'_{\I}\|^s   
 \end{align}
 which can be obtained by a simple combinatorial argument.
 In addition, we can write (\ref{eq:14-}) as 
\begin{align} \label{N-inf}
   E_{k^*}{(\epsilon',\theta, v,\delta)}=\bigcup_{N\in \mathbb{N}}\bigcup_{(j,k_j)\in \mathbb{N}^2}\{\E\in E_{k^*}{(\epsilon',\theta, v,\delta)}:n_{k_j}=N\}. 
\end{align}
To keep a consistent upper bound of the Hausdorff $s$-measure for different values of $k_j$, we denote 
\begin{align}
    \I^*_{\text{fixed}}:= \T|_1^{\floor{(\frac{v-\epsilon}{(\theta+\delta)v+\epsilon})^{k^*-k'} (v-\epsilon)N}-1} \cdots \T|_1^{\floor{(\frac{v-\epsilon}{(\theta+\delta)v+\epsilon}) (v-\epsilon)N}-1} \T|_1^{\floor{((\theta-\delta)v-\epsilon)N}}.
\end{align}
By (\ref{N-inf}) and the growth rate of $n_k$, we have the Hausdorff $s$-measure of $\pi(\bigcup_{(j,k_j)\in \mathbb{N}^2}\{\E\in E_{k^*}{(\epsilon',\theta, v,\delta)}:n_{k_j}=N\})$ is bounded by 
\begin{align}\label{upper1.}
  (c\log N)^2 N^{c\log N} \diam(\Lambda)K^{4(\floor{c\log N}-k')+3} \|f'_{\I^*_{\text{fixed}}}\|^s\sum_{\I\in \Sigma^t} \|f'_{\I}\|^s   .
 \end{align}
Thus, the Hausdorff $s$-measure of $\pi(E_{k^*}{(\epsilon',\theta, v,\delta)})$ is bounded by 
\begin{align} \label{upper2}
    \sum_{N=1}^{\infty} (c\log N)^2 N^{c\log N} \diam(\Lambda)K^{4(\floor{c\log N}-k')+3} \|f'_{\I^*_{\text{fixed}}}\|^s\sum_{\I\in \Sigma^t} \|f'_{\I}\|^s.
\end{align}
This finishes the proof of ($*$).
\end{proof}

By simply applying the root test and equation (\ref{upper2}), we have the Hausdorff dimension of $E_{k^*}{(\epsilon',\theta, v,\delta)}$ is bounded by the unique $s$ that satisfies the equation
\begin{align}\label{upper3}
    \limsup_{N\to\infty} (\frac{1}{N}\log \|f'_{\I^*_{\text{fixed}}}\|^s+\frac{1}{N}\log \sum_{\I\in \Sigma^t}\|f'_{\I}\|^s)=0.
\end{align}
The sequence $(\frac{1}{N}\log \sum_{\I\in \Sigma^t}\|f'_{\I}\|^s)_N$ is well-known being sub-additive, see Chapter 5 of \cite{Falconer}. Thus, 
\[\lim_{N\to \infty}\frac{1}{N}\log \sum_{\I\in \Sigma^t}\|f'_{\I}\|^s \text{ exists.} \]
We can rearrange the terms in (\ref{upper3}) to obtain
\begin{align}
    s\limsup_{N\to \infty}\frac{\log\|f'_{\I^*_{\text{fixed}}}\| }{N}+\left(\frac{(\theta+\delta)(1-v)-1}{\theta+\delta-1}+\epsilon'\right)P(-s\log\|f'\|)=0.
\end{align}
Notice that the number of digits in $G(N,x,\theta,v)$ is bounded by $\theta v\sum_{i=0}^{\infty}\theta^{-i}$. Together with (\ref{eq:13.1}), we have
\begin{align}
\limsup_{N\to \infty}\frac{\log\|f'_{\I^*_{\text{fixed}}}\| }{N} - \Omega^+{(\T,\theta,v)} \leq &-\log\|f'_{\min}\|\left(\delta(1+\frac{v}{(\theta-1)^2})+\epsilon\left(\frac{(\theta+\delta)^2+1}{(\theta+\delta-1)^2}+2\right)\right)
\end{align}
Notice that since $\Upsilon(\T,\theta,v,\delta)=\bigcup_{n=1}^{\infty}\bigcup_{k^*=1}^{\infty}E_{k^*}(\frac{1}{n},\theta, v,\delta)$, by basic properties of Hausdorff dimension, see for example Section 2.1 in \cite{Falconer}, we have 
\begin{align}
\dimH{\pi(\Upsilon(\T,\theta,v,\delta))}=\sup_{{(n,k^*)}\in\mathbb{N}^2}\dimH{\pi(E_{k^*}(\frac{1}{n},\theta, v,\delta))}
\end{align}
Thus, taking $n \to \infty$, we have
\begin{align} \label{estimate}
    \dimH{\pi(\Upsilon(\T,\theta,v,\delta))}\leq \Bar{s}
\end{align}
where $\Bar{s}$ is the unique solution of
\begin{align}
    s[\Omega^+(\T,v,\theta)+\log\frac{1}{\|f'_{\min}\|}(\delta(1+\frac{v}{(\theta-1)^2}))]+\frac{(\theta+\delta)(1-v)-1}{\theta+\delta-1}P(s)=0.
\end{align}
\begin{lemma1} \label{lemma3.3}
   Let $\Bar{s}$ and $s^+$ be defined as in Lemma \ref{lemma3.2} and Definition \ref{def} respectively. Then $\Bar{s}-s^+\leq O(\delta).$
    \begin{proof}
      Since $P(s)$ is differentiable, convex, strictly decreasing function with respect to $s$, we have for $\epsilon>0$, there exists $\Bar{\delta}>0$ such that for $\delta'<\Bar{\delta}$
      \[P(s+\delta')\leq P(s)+\delta'P'(s)-\epsilon.\]
      Thus, the slopes of lines that cross the origin and any point on $P(s)$ with $s\in [s^+,s^++\Bar{\delta}]$ will be bounded below by $L_1$ and bounded above by $L_2$ where
      \begin{align}
     L_1:=\frac{P(s^+)+\Bar{\delta}P'(s^+)-\epsilon}{s^++\Bar{\delta}}, \hspace{1cm}   L_2:=\frac{P(s^+)}{s^+}.
      \end{align}
    We additionally denote for convenience the slope of the line crossing the origin and $(s^+,P(s^+))$ by $L_1'$; and the slope of the line crossing the origin and $(\Bar{s},P(\Bar{s}))$ by $L_2'$, where
    \begin{align}
        L_1':&= -\frac{\theta+\delta-1}{(\theta+\delta)(1-v)-1}[\Omega^+(\T,v,\theta)-\log\|f'_{\min}\|(\delta(1+\frac{v}{(\delta+1)^2}))],\\
        L_2':&= -\frac{\theta-1}{\theta(1-v)-1}\Omega^+(\T,v,\theta).
    \end{align}
    We notice that $L_2=L_2'$. The Figure \ref{fig:intersection-of-curves1} shows that $\Bar{s}-s^+\leq \Bar{\delta}$ if $L_2'-L_1'\leq L_2-L_1$.
Substituting the value of $L_1,L_2,L_1',L_2'$ into $L_2'-L_1'\leq L_2-L_1$, we have $L_2'-L_1'\leq L_2-L_1$ when
\begin{align}\label{error} 
    \frac{\Bar{\delta}}{\delta}\geq \frac{{2 s^+}^2}{P(s^+)-s^+P'(s^+)}\cdot [&\frac{2\theta-1}{\theta-\theta v-1}\log\frac{1}{\|f'_{\min}\|}(1+\frac{v}{(\theta+1)^2})\nonumber\\
&-\frac{v}{(\theta-\theta v -1)^2}\Omega^+(\T,v,\theta)].
\end{align}
Let us denote the right hand side of inequality (\ref{error}) by $A$, then the equality case of (\ref{error}) is satisfied for $\delta= A^{-1}\Bar{\delta}$. Thus,
\[\Bar{s}-s^+\leq \Bar{\delta}\leq A\delta\leq \frac{{-4 {s^+}^2\log\|f'_{\min}\|}}{P(s^+)-s^+P'(s^+)}\cdot \frac{2\theta-1}{\theta-\theta v-1}\cdot \delta.\]
In addition, let $\hat{\delta}>0$, for $\theta\geq \frac{1}{1-v}+\hat{\delta}$, 
\begin{align}\label{NEW}
    \Bar{s}-s^+\leq \frac{{-4 \log\|f'_{\min}\|{\dimH{\Lambda}}}}{P'(\dimH{\Lambda})}\cdot \frac{2\theta-1}{\theta-\theta v-1}\cdot \delta\leq \frac{{-12\log\|f'_{\min}\| {\dimH{\Lambda}}}}{(1-v)P'(\dimH{\Lambda})}\cdot \hat{\delta}^{-1} \cdot \delta.
\end{align}
    \end{proof}
\end{lemma1}

\begin{figure}[htbp]
  \centering
  \begin{tikzpicture}
    \begin{axis}[
      width=0.6\textwidth,
      xlabel={$s$},
      legend pos=north west,
      xmin=0, xmax=3,
      ymin=0, ymax=3,
      xtick=\empty, % Hide x-axis ticks
      ytick=\empty, % Hide y-axis ticks
    ]
      % Plot the first curve (e.g., y = x^2)
      \addplot[blue,mark=none, domain=0:3, samples=200, name path=curveA] {3-(7/3)*x^(1/3)};
      \node[blue, right, yshift=2mm] at (axis cs:0.3,1.5) {$P(s)$}; % Add node for curveB

      % Plot the second curve (e.g., y = 2x)
      \addplot[red,mark=none, domain=0:2, samples=100, name path=curveB] {2*x};
     
      \node[red, left, yshift=2mm] at (axis cs:1.3,2.5) {$L_2s$}; % Add node for curveB

      \addplot[green,mark=none, domain=0:2, samples=100, name path=curveC] {(1.5)*x};
     
      \node[green, right, yshift=2mm] at (axis cs:1.44,2.5) {$L'_1s$}; % Add node for curveC

      \addplot[black,mark=none, domain=0:4, samples=100, name path=curveD] {(1.2)*x};
     
      \node[black, right, yshift=2mm] at (axis cs:1.85,2.5) {$L_1s$}; % Add node for curveD

      % Mark the intersection points
      \draw [name intersections={of=curveA and curveB, by={intersection1}}];
      \node[circle,fill,inner sep=2pt] at (intersection1) {}; % Mark intersection1
      \draw[dashed] (intersection1) -- (intersection1|-{axis cs:0,0});

      \draw [name intersections={of=curveA and curveD, by={intersection2}}];
      \node[circle,fill,inner sep=2pt] at (intersection2) {}; % Mark intersection2
      \draw[dashed] (intersection2) -- (intersection2|-{axis cs:0,0});

      \draw [name intersections={of=curveA and curveC, by={intersection3}}];
      \node[circle,fill,inner sep=2pt] at (intersection3) {}; % Mark intersection3
      \draw[dashed] (intersection3) -- (intersection3|-{axis cs:0,0});
    \end{axis}
 \node[below, xshift=-16mm, yshift=0mm] at (current axis.south) {$ s^+ +\Bar{\delta}$};
 \node[below, xshift=-27mm, yshift=0mm] at (current axis.south) {$ s^+ $};
 \node[below, xshift=-23.5mm, yshift=-1mm] at (current axis.south) {$ \Bar{s} $};
    
  \end{tikzpicture}
  \caption{Illustration of the relation between $s^+$ and $\Bar{s}$.}
  \label{fig:intersection-of-curves1}
\end{figure}

\begin{flushleft}
Applying Lemma \ref{lemma3.2} and Lemma \ref{lemma3.3}, we complete the proof of Theorem 2 part 2(1).    
\end{flushleft} 

\begin{remark}
    This part of the proof is inspired by \cite{bugeaud}. However, there is an inaccuracy in their proof. By their way of abuse of notations, the inequality (2.6) in \cite{bugeaud}, the subsequence $(k_j)_j$ has to be successive integers, which is obviously not the case in general. Even though it seems like a critical error, it turns out that if we just write the proof without the abuse of notations, the method still functions properly. As one can see from the proof above, the corrected method of proof looks much more complicated than the proof presented in \cite{bugeaud}.
\end{remark}

\subsection{A proper measure and the local dimension: lower bound}
\begin{flushleft}
   To find the lower bound of $\dimH{\pi(\Delta(\T,v,\theta))}$, we will construct a measure on a subset of it. 
Recall from Definition $\ref{def}$ that $\Delta(\T,\theta,v):= \{\E\in\NA:v_e(\E,\T)\geq v\} \cap \{\E\in\NA:v_s(\E,\T)\geq \theta v\}$. 
Here we aim to build two sequences $\{m_k\}_k$ and $\{n_k\}_k$, such that they simultaneously satisfy the following three conditions, 
\end{flushleft}
\begin{align}
    &\lim_{k\to \infty} \frac{m_k-n_k}{n_{k+1}}=v, \label{eq:37} \\
    &\lim_{k\to \infty} \frac{m_k-n_k}{n_{k}}=\theta v, \label{eq:38}\\
    &n_k<m_k<n_{k+1}. \label{eq:39}
\end{align}
Let $L$ be the set that contains all elements $\E\in \NA$ such that $ \E|_{n_k+1}^{m_k-1} = \T|_1^{m_k-n_k-1}$. Equations
(\ref{eq:37}) and (\ref{eq:38}) indicate an easy choice:
\begin{align}
n_k=\floor{a\theta^k}\text{ and } m_k=\floor{(\theta v+1) n_k},    
\end{align}
where $a\in \mathbb{R}^+$ is large enough. More precisely, to validly define the set $L$, we need $a$ satisfying 
\[n_{k+1}-m_k\geq 2 \text{ and } m_k-n_k\geq 2 \text{ for all }k\] 
which is guaranteed when both of the following inequalities are satisfied
\[(a\theta^{k+1}-1)-((\theta v+1)(a\theta^k+1)+1)\geq 2 \text{ for all }k,\]
\[((\theta v+1)(a\theta^k-1)-1)-(a\theta^k+1)\geq 2 \text{ for all }k.\]
Thus, for $a\geq \max\{\frac{5+\theta v}{(\theta-\theta v-1)\theta},\frac{5+\theta v}{\theta^2v}\}$, set $L$ is well defined. It is not hard to check that $L\subseteq \Delta(\T,v,\theta) $.

The following steps prove for $\T\in G$, $v\in(0,1)$ and $\theta\in(\frac{1}{1-v},+\infty)$ that $\dimH{\pi(L)}\geq \min\{s^-(\T,v,\theta), d\}$, where $s^-$ can be found in Definition $\ref{def}$. By constructing a proper measure on $L$, our desired lower bound follows from the use of the mass distribution principle, see for example Chapter 10 of \cite{Falconer}.

For $l\in\mathbb{N}$, let $\widehat{\Sigma}^l:=\{\I^*\in\Sigma^l| \I^*\J \in L$ for some $\J\in\NA\}$ and fix any arbitrary $\E_0\in L$. We define discrete measures $\mu_l$ by assigning measures to finite discrete points $\pi(\E|_1^{l}\E_0|_{l+1}^{\infty})$ for all $\E\in L$ in the following manner:
\begin{align}\label{eq:32.}
    \mu_l(\{\pi(\E|_1^{l}\E_0|_{l+1}^{\infty})\})= \frac{\|f'_{\E|_1^{l}}\|^{s^-(\T,v,\theta)}}{\sum_{\I^*\in \widehat{\Sigma}^l } \|f'_{\I^*}\|^{s^-(\T,v,\theta)}           }.
\end{align}
Notice that $\pi(L)$ is compact. To see this, let $x\in\pi(L) $ and for a sequence $(x_i)_i$ where $x_i\in \pi(L)$ such that
\[\lim_{i\to\infty} d(x,x_i)=0.\]
From OSC of the IFS, we know that $\pi^{-1}(x)\cap L$ is not empty. Therefore, $x\in \pi(L)$ and then $\pi(L)$ is closed. It is also easy to obtain that $\pi(L)$ is bounded as $\Lambda$ is bounded. By Proposition 1.9 in \cite{Falconer}, this extends to a probability measure $\mu$ supported by $\pi(L)$ which is a weak limit of a subsequence $(l_k)_k$ of the measures $\mu_{l_k}$. Notice that, for any $\E\in L$ and $k> k'$, let $\widehat{\Sigma}_{k'}^{k}(\E):= \{\J\in \widehat{\Sigma}^k|\J|_1^{k'}=\E|_1^{k'} \}$, by (\ref{6.3}) and (\ref{eq:32.}),
\begin{align}\label{eq:35.}
    \mu_k(\pi([\E|_1^{k'}]))= \frac{\sum_{\J\in \widehat{\Sigma}_{k'}^{k}} \|f'_{\J}\|^{s^-}  }{ \sum_{\I\in \widehat{\Sigma}^{k}}\|f'_{\I}\|^{s^-}  } \leq K^2 \frac{\|f'_{\E|_1^{k'}}\|^{s^-}}{\sum_{\I\in \widehat{\Sigma}^{k'}}\|f'_{\I}\|^{s^-} }.
\end{align}

\begin{lemma1}\label{3..}
Let $\{f_i\}_{i=1}^N$ be a conformal iterated function system that satisfies the open set condition and $\Lambda$ is the self-conformal invariant set. Let $L$ be defined as above, $\E_0\in L$. Let $\mu$ be the weak limit of $(\mu_{l_k})_k$ as above. For the $s^-(\T,v,\theta)$ from Definition $\ref{def}$, for any $\E\in L$ we have
\begin{align} \label{3.4.2}
    \liminf_{r\to 0} \frac{\log\mu(B(\pi(\E),r))}{\log|r|}\geq s^-(\T,v,\theta).
\end{align}
\begin{proof}
By (\ref{6.4}), we can find a open set $O\subset V$ such that $\Lambda\subset\Bar{O}\subset V$. Let $\E\in L$, $B(\pi(\E),r)$ be a ball of radius $r<1$. Curtail each infinite sequence $i_1 i_2 \cdots\in \Sigma^{\mathbb{N}}$ after the first term $i_p$ such that 
\begin{align}
    \|f'_{\min}\|r\leq \diam(f_{i_1\cdots i_p}(O)) \leq r.
\end{align}
We denote $Q_r\subset \Sigma^*$ to be the finite set of all finite sequences obtained in this way. Thus, for every $\I\in \Sigma^{\mathbb{N}} $, there is exactly one value of $p$ such that $i_1\cdots i_p\in Q_r$. It is not hard to notice that $\{f_{i_1\cdots i_p}(O):i_1\cdots i_p\in Q_r\}$ is disjoint. We choose $a_1$ and $a_2$ so that $O$ contains a ball of radius $a_1$ and contained in a ball of radius $a_2$. By (\ref{6.1}) and (\ref{6.2}), we obtain that $f_{i_1\cdots i_p}(O)$ is contained in a ball of radius $\|f'_{i_1\cdots i_p}\|a_2$, so a ball of radius $\frac{K' a_2}{\diam(O)} r$; $f_{i_1\cdots i_p}(O)$ contains a ball of radius $K'^{-1}\|f'_{i_1\cdots i_p}\|a_1$, so a ball of radius $\frac{\|f'_{\min}\|a_1}{K' \diam(O)}r$. Let $\widehat{Q}_r(B)$ be a set of finite sequences $i_1\cdots i_p\in Q_r$ such that $B$ intersect $f_{i_1\cdots i_p}(\Bar{O})$. By Lemma 9.2 in \cite{books/daglib/0067317}, $\widehat{Q}_r(B)$ contains at most $q=(1+2\frac{K' a_2}{\diam(O)})^d(\frac{\|f'_{\min}\|a_1}{K' \diam(O)})^{-d}$ elements where $d$ is the topological dimension of the domain of functions in the IFS. Thus, 
\begin{align} \label{3.4.3}
    \mu(B(\pi(\E),r))=\mu(B(\pi(\E),r)\cap\Lambda)\leq \mu\{\pi(\cup_{\I\in \widehat{Q}_r(B)} \I )\}.
\end{align}
From the definition of $\widehat{Q}_r(B)$, for all $r<1$, there is an integer $k'(r)$ such that $\E|_1^{k'(r)}\in \widehat{Q}_r(B)$. We denote $k^*(r,\E,\I)$ as the length of $\I\in \widehat{Q}_r(B)$ and it is bounded by $\frac{\log\|f'_{\min}\|}{\log\|f'_{\max}\|}k'(r)$ and $\frac{\log\|f'_{\max}\|}{\log\|f'_{\min}\|}k'(r)$. For convenience, we write $k'(r)$ as $k'$ and $k^*(r,\E,\I)$ as $k^*$. Notice that $r\to 0$ implies $k',k^*\to\infty$. In addition, for all $\Bar{k}\in \mathbb{N}$, there exists $r\in \mathbb{R}$ such that $k'(r)=\Bar{k}$. Therefore, 
\begin{align}
    \liminf_{r\to 0}\frac{\log \mu(B(\pi(\E),r))}{\log|r|}&\geq \liminf_{r\to 0}\frac{\sum_{\I\in \widehat{Q}_r(B)}\mu(\I)}{\log|r|}\\
    &\geq \liminf_{r\to 0} \frac{\log q r^{s^-} / \min_{k^*}\sum_{\I\in \widehat{\Sigma}^{k^*}}\|f'_{\I}\|^{s^-}}{\log {r}}\\
    &\geq s^- + \liminf_{k'\to \infty} \frac{-\log \min_{k^*}\sum_{\I\in \widehat{\Sigma}^{k^*}}\|f'_{\I}\|^{s^-} }{\log\frac{\diam(O)\|f'_{\E|_1^{k'}}\|}{\|f'_{\min}\|K}}.
\end{align}
Since for a given $k'$ the number of elements in $\widehat{Q}_r(B)$ is bounded. We define $\hat{k}_{k'}(r,\E)$ to be the $k^*$ that obtains
the extreme of $\min_{k^*}\sum_{\I\in \widehat{\Sigma}^{k^*}}\|f'_{\I}\|^{s^-}$. Then
\begin{align}
        s^- + \liminf_{k'\to \infty} \frac{-\log \inf_{k^*}\sum_{\I\in \widehat{\Sigma}^{k^*}}\|f'_{\I}\|^{s^-} }{\log\frac{\diam(O)\|f'_{\E|_1^{k'}}\|}{\|f'_{\min}\|K}}
    &= s^- - \limsup_{k'\to\infty} \frac{\log \Sigma_{\I\in \widehat{\Sigma}^{\hat{k}}} \|f'_{\I}\|^{s^-}}{\log \|f'_{\E|_1^{k'}}\|}\\
    &= s^-- \limsup_{k'\to\infty} \frac{\hat{k}}{k'} \frac{(1/\hat{k})\log \Sigma_{\I\in \widehat{\Sigma}^{\hat{k}}} \|f'_{\I}\|^{s^-}}{(1/k')\log \|f'_{\E|_1^{k'}}\|}.
\end{align}
Notice that $(1/k')\log \|f'_{\E|_1^{k'}}\|$ is a negative value bounded by $\log\|f'_{\min}\|$ and $\log\|f'_{\max}\|$, and $\frac{\hat{k}_{k'}}{k'}$ is a positive value bounded by $\frac{\log\|f'_{\min}\|}{\log\|f'_{\max}\|}$ and $\frac{\log\|f'_{\max}\|}{\log\|f'_{\min}\|}$. Thus, a sufficient condition to finish the proof is that for $k'$ large enough $(1/{\hat{k}_{k'}})\log \sum_{\I\in \widehat{\Sigma}^{\hat{k}_{k'}} } \|f'_\I\|^{s^-}\geq 0$.
We assume $\liminf_{k'\to\infty}(1/{\hat{k}_{k'}})\log \sum_{\I\in \widehat{\Sigma}^{\hat{k}_{k'}} } \|f'_\I\|^{s^-}<0$ for convenience of writing the following calculation. 
Notice that $(\hat{k}_{k'}(r,\E))_{k'}$ is a non-decreasing sequence of nature numbers. We further divide $(\hat{k}_{k'}(r,\E))_{k'}$ into two subsequences $(\hat{k}^+_{k'})_{k'}$ and $(\hat{k}^-_{k'})_{k'}$, such that for all $k'\in\mathbb{N}$, there exist $n_{k_{(+,k')}},n_{k_{(-,k')}}\in\mathbb{N}$, $n_{k_{(+,k')}}<\hat{k}^+_{k'}\leq m_{k_{(+,k')}}$ and $m_{k_{(-,k')}}\leq\hat{k}^-_{k'}< n_{k_{(1,k')}+1}$. In addition, recall definition of $s^-$ in Definition $\ref{def-}$, 
\begin{align}\label{eql1}
 \liminf_{k'\to\infty}(1/{\hat{k}^+_{k'}})\log \sum_{\I\in \widehat{\Sigma}^{\hat{k}^+_{k'}} } \|f'_\I\|^{s^-} &\geq \limsup_{k'\to\infty}  \frac{n_{k_{(+,k')}}}{\hat{k}^+_{k'}} \cdot \liminf_{k'\to\infty} \Big(\frac{\log\|f'_{G(n_k,x,\theta,v)}\|}{n_k} \nonumber\\
 & \text{ }\text{ }  +\frac{\theta-\theta v-1}{\theta-1}P(s^{-})\Big)+\liminf_{k'\to\infty}\frac{\hat{k}^+_{k'}-m_k}{\hat{k}^+_{k'}}\log\|f'_{\max}\|\nonumber\\
&\geq \liminf_{k'\to\infty} \Big(\frac{\log\|f'_{G(n_k,x,\theta,v)}\|}{n_k}+\frac{\theta-\theta v-1}{\theta-1}P(s^{-})\Big)\nonumber \\
&\geq \liminf_{M\to\infty} \Big(\frac{\log\|f'_{G(M,\T,\theta,v)}\|}{M}+\frac{\theta-\theta v-1}{\theta-1}P(s^{-})\Big)=0,
\end{align}
and as $s^-\leq\dimH{\Lambda}$
\begin{align}\label{eql2}
 \liminf_{k'\to\infty}(1/{\hat{k}^-_{k'}})\log \sum_{\I\in \widehat{\Sigma}^{\hat{k}^-_{k'}} } \|f'_\I\|^{s^-} &\geq \limsup_{k'\to\infty}\frac{n_k}{\hat{k}^-_{k'}} \cdot \liminf_{k'\to\infty} \Big(\frac{\log\|f'_{G(n_k,x,\theta,v)}\|}{n_k} \nonumber \\
 &\text{ }\text{ }+\frac{\theta-\theta v-1}{\theta-1}P(s^{-})\Big)+\liminf_{k'\to\infty}\frac{\hat{k}-m_k}{\hat{k}}P(s^{-}) \nonumber \\
 & \geq(1-v) \liminf_{k'\to\infty} \Big(\frac{\log\|f'_{G(n_k,x,\theta,v)}\|}{n_k}+\frac{\theta-\theta v-1}{\theta-1}P(s^{-})\Big)\nonumber \\
 &\geq(1-v) \liminf_{M\to\infty} \Big(\frac{\log\|f'_{G(M,\T,\theta,v)}\|}{M}+\frac{\theta-\theta v-1}{\theta-1}P(s^{-})\big)\geq 0.
\end{align}
By inequalities \ref{eql1} and \ref{eql2},
\begin{align*}
    \liminf_{k'\to\infty}(1/{\hat{k}_{k'}})\log \sum_{\I\in \widehat{\Sigma}^{\hat{k}} } \|f'_\I\|^{s^-} &\geq\min \{ \liminf_{k'\to\infty}(1/{\hat{k}^-_{k'}})\log \sum_{\I\in \widehat{\Sigma}^{\hat{k}^-_{k'}} } \|f'_\I\|^{s^-}, \liminf_{k'\to\infty}(1/{\hat{k}^-_{k'}})\log \sum_{\I\in \widehat{\Sigma}^{\hat{k}^-_{k'}} } \|f'_\I\|^{s^-}\}\\
    &\geq 0.
\end{align*}
Thus, by contradiction, it implies 
\[-\limsup_{k'\to\infty} \frac{\log \sum_{\I\in \widehat{\Sigma}^{\hat{k}^-_{k'}}} \|f'_{\I}\|^{s^-}}{\log \|f'_{\E|_1^{\hat{k}^-_{k'}}}\|}\geq 0.\] 
This finishes the proof of (\ref{3.4.2}). \end{proof}
\end{lemma1}
In order to compute the lower bound of $\dimH{\Delta(\T,v,\theta)}$, we apply Lemma \ref{3.4.2} and the Mass Distribution Principle, see Proposition 2.3 in \cite{Falconer}. We obtain  
\begin{align}
    \dimH{\Delta(\T,v,\theta)}\geq \dimH{L} \geq \min\{s^-,\dimH{\Lambda}\}.
\end{align}
This finishes the proof of Theorem 2 part 2(2).

\section{Hausdorff dimension of eventually always hitting set: Proof of Theorem 1}
In this section, we combine all results discussed in previous sections to prove Theorem 1.

\begin{lemma1}
 \label{Lemma:5.2}
Let $\T\in G$, $v_a,v_b\in \mathbb{R}\cup \{\infty\}$ and $(a_n)_n,(b_n)_n$ be two sequences of natural numbers such that $\lim_{n\to\infty}\frac{a_n}{n}=v_a$ and $\lim_{n\to\infty}\frac{b_n}{n}=v_b$. If $v_b< v_a$, then
\[R_e((a_n)_n,\T) \subset R_e((b_n)_n,\T) .\]
\begin{proof} 
We can find a large enough $N\in\mathbb{N}$, such that for all $n>N$, we have $a_n>b_n$. Thus, for any $\E\in R_e((a_n)_n,\T) $, we have for large enough $N'\in \mathbb{N}$, that for all $n>N'$ there exists $n'\leq n$ such that $[\E|_{n'+1}^{n'+a_n}]=[\T|_{1}^{a_n}]$. Therefore, for $n> \max\{N,N'\}$, we have $\E|_{n'+1}^{n'+b_n}=\T|_{1}^{b_n}$, which implies $\E \in R_e((b_n)_n,x)$.   \end{proof}
\end{lemma1}
It is not hard to obtain a “sandwich rule” here. Let $v_b<v_c<v_a$. By directly using Lemma \ref{Lemma:5.2}, for a sequence $(c_n)_n$ such that $\lim_{n\to\infty} \frac{c_n}{n}=v_c$, 
\begin{align}\label{sandwich}
   \dimH{\pi(R_e(a_n)_n,\T))}<\dimH{\pi(R_e(c_n)_n,\T))}<\dimH{\pi(R_e(b_n)_n,\T))}. 
\end{align}
\begin{lemma1} \label{lemma: 5.1}
Theorem 1 holds for $a_n=\floor{v n}$, when $v\in (0,1)$.
\end{lemma1}
\begin{proof}
For a given $v\in (0,1)$ and $\E\in\NA$, denote $g(v,\theta,\T):=-\frac{\theta-1}{\theta-\theta v-1}\Omega^+(\T,v,\theta)$. Recall from Definition \ref{def-}, $g(v,\theta,\T)$ is the linear part of the right hand side of the equation $P(s^+(\T,v,\theta))= -s^+(\T,v,\theta)\frac{\theta-1}{\theta-\theta v-1}\Omega^+(\T,v,\theta)$. It is not hard to obtain that $  \Omega^+(\T,v,\theta)\leq \frac{\theta^2 v}{\theta-1}\log\|f'_{\max}\|$, and so $g(v,\theta,\T)\geq \frac{\theta^2 v}{\theta-\theta v -1}\log\frac{1}{\|f'_{\max}\|}$. By calculating the derivative with respect to $\theta$, $\frac{\theta^2v}{\theta-\theta v-1}\leq \frac{4v}{(1-v)^2}$ and the equality is achieved when $\theta=\frac{2}{1-v}$. Thus, we get $g(v,\theta,\T)\geq \frac{4 v}{(1-v)^2}\log\frac{1}{\|f'_{\max}\|}$. The existence of the lower bound implies the existence of $\inf_{\theta\in (\frac{1}{1-v},+\infty)} g(v,\theta)$, and we define $h(\T,v):=\inf_{\theta\in (\frac{1}{1-v},+\infty)} g(v,\theta)$. 

Now we will prove that there exists $\hat{\delta}>0$ such that $  g(v,\theta,\T)> h(\T,v) $ for all $\theta\in (\frac{1}{1-v},\frac{1}{1-v}+\hat{\delta}]$. Notice that $  \Omega^+(\T,v,\theta)\geq \frac{\theta^2 v}{\theta-1}\log\|f'_{\min}\|$, and so $h(\T,v) \leq g(v,\frac{2}{1-v},\T)\leq \frac{4}{(1-v)^2}\log\frac{1}{\|f'_{\min}\|} $. Let $\hat{\delta}=\frac{2}{(1-v)v}\cdot \frac{\log \|f'_{\min}\|}{\log \|f'_{\max}\|}$, and $\theta\in (\frac{1}{1-v},\frac{1}{1-v}+\hat{\delta}]$. Then,
\[g(v,\theta,\T)\geq \frac{\theta^2v}{\theta-\theta v-1}\cdot\log\frac{1}{\|f'_{\max}\|}\geq \frac{v}{1-v}\cdot \hat{\delta}^{-1} > \frac{4}{(1-v)^2}\log\frac{1}{\|f'_{\min}\|}\geq h(\T,v).\]
Therefore, $\inf_{\theta\in (\frac{1}{1-v},+\infty)} g(v,\theta)=\inf_{\theta\in (\frac{1}{1-v}+\hat{\delta},+\infty)} g(v,\theta) $. We either have a sequence $(\theta_n)_n$ where $\theta_n\in (\frac{1}{1-v}+\hat{\delta})$ for all $n\in\mathbb{N}$ such that $\lim_{n\to\infty}g(v,\theta_n)= h(\T,v)$ or have a $\theta^*\in (\frac{1}{1-v}+\hat{\delta},+\infty)$ such that $h(v,\theta^*)=h(\T,v)$. In both of these cases, we can construct a countable dense subset $\Xi_{\hat{\delta}} \subset (\frac{1}{1-v}+\hat{\delta},+\infty)$ containing $\{\theta_n:n\in\mathbb{N}\}$ or $\theta^*$ such that for any $\delta>0$, by Theorem 2 part(2)1, we have $\theta\in \Xi$ such that $\dimH{\Upsilon(\T, v,\theta ,\delta)}\leq s^+(\T,v,\theta) + O(\delta)$ for all $\delta$ small and by Lemma \ref{lemma3.3} $O(\delta)$ is uniform for $\theta\in (\frac{1}{1-v}+\hat{\delta},\infty)$. The purpose of constructing $\Xi_{\hat{\delta}}$ is to find a countable subset of $(\frac{1}{1-v}+\hat{\delta},\infty)$ such that there exists a sequence of a point that attains the extreme of $\dimH{\pi(\Upsilon(\T,v,\theta,\delta))}$ for $\theta\in (\frac{1}{1-v}+\hat{\delta},\infty)$.
For a given $\epsilon>0$, we take different small $\delta=\delta_{\hat{\delta}}$ for each $\hat{\delta}>0$ such that $O(\delta_{\hat{\delta}})\leq \epsilon $. Recall from Definition $\ref{def-}$, $\hat{s}^+(\T,v):= \sup_{\theta\in (\frac{1}{1-v},+\infty)} s^+(\T,v,\theta) $. Since $\Xi_{\hat{\delta}}$ contains $(\theta_n)_n$ or $\theta^*$ that attains the extreme of $s^+(\T,v,\theta)$.  Taking $\epsilon\to 0$ we can conclude that for a given $\hat{\delta}>0$
\begin{align}
    \dimH{\pi(\bigcup_{\theta\in \Xi_{\hat{\delta}}}  \Upsilon(\T,\theta,v,\delta_{\hat{\delta}}))  } \leq \sup_{\theta\in \Xi_{\hat{\delta}}} s^+(\T,v,\theta) =\sup_{\theta\in (\frac{1}{1-v}+\hat{\delta},\infty)} s^+(\T,v,\theta).
\end{align}
Therefore,
\begin{align}\label{eq:51}
    \dimH{\pi(R_e(\T,v))} \leq \dimH{\bigcup_{\hat{\delta}\in \{\frac{1}{n}:n\in \mathbb{N}\}} \pi(\bigcup_{t\in \Xi}  \Upsilon(x,t,v,\delta_\theta))  } \leq \lim_{n\to\infty}\sup_{\theta\in (\frac{1}{1-v}+\frac{1}{n}),\infty} s^+(\T,v,\theta)
= \hat{s}^+(\T,v).
\end{align}
 Since $  \Omega^-(\T,v,\theta)\geq \frac{\theta^2 v}{\theta-1}\log\|f'_{\min}\|$. Following a similar argument as previous, together with Theorem 2 part (2)1, we can deduce that
\begin{align}\label{eq:52}
    \dimH{\pi(R_e(\T,v)})\geq \dimH{\pi(\bigcup_{t\in \Xi} \Delta(\T,v,\theta) )}\geq\sup_{\theta\in \Xi} s^-(\T,v,\theta) = \hat{s}^-(\T,v).
\end{align}
Thus, followed by equations (\ref{eq:51}) and (\ref{eq:52}), we get 
\begin{align}\label{sandwich1}
 \hat{s}^-(\T,v)\leq\dimH{\pi(R_e(\T,v))}\leq \hat{s}^+(\T,v).   
\end{align}

\end{proof}

We inherit notations from Lemma \ref{lemma: 5.1} in the following Lemma, and provide a proof of continuity of the upper and lower bound of $\dimH{\pi(R_e(\T,v))}$.

\begin{lemma1} \label{Lemma:5.3}
Let $x\in\Lambda$ and $v\in [0,1]$. Let $w^+(\T,v)$ and $w^-(\T,v)$ be as in Definition \ref{def-}. Both of $w^+(\T,v)$ and $w^-(\T,v)$ are continuous with respect to $v$.
\end{lemma1}
\begin{proof}
We give a proof for $w^+(\T,v)$. The claim for $w^-(\T,v)$ follows similarly. We first prove for $v\in (0,1)$ and $\theta\in(\frac{1}{1-v},\infty)$ that $g(v,\theta,\T)$ is continuous with respect to $v\in(0,1)$. Since $\frac{\theta-1}{\theta-\theta v-1}$ is continuous with respect to $v\in(0,1)$, it is sufficient to prove that $\Omega^+(\T,v,\theta)$ is continuous with respect to $v\in(0,1)$. That is, for a given $v\in(0,1)$, for any $\epsilon>0$, there exist $\delta>0$, such that for all $v'\in (v-\delta,v+\delta)$, we have $|\omega^+(\T, \theta,v)-\omega^+(\T, \theta,v')|<\epsilon$ when $\theta> \max\{\frac{1}{1-v},\frac{1}{1-v'}\}$. Recall from Definition $\ref{def-}$ that 
\[\Omega^+(\T,v,\theta)=\limsup_{M\to\infty} \frac{\log \left\|f'_{\T|_1^{\floor{\frac{v M}{\theta^p}}}\dots \T|_1^{\floor{v M \theta}}}\right\|}{M}=\limsup_{M\to\infty}\frac{\log\|f'_{G(M,\T,\theta,v)}\|}{M}.\]
We will first prove continuity from the right. Left continuity follows from similar method. Recall that $p$ is the largest integer such that $\frac{vM}{p}\geq 1$. For each $M\in\mathbb{N}$, $\delta'>0$, let $p^*$ be the largest integer such that $\floor{\frac{(v+\delta') M}{\theta^{p^*} } }-\floor{\frac{v M}{\theta^{p^*} }}\geq 1$. For every $-1\leq j\leq p^*$, we can split $\I|_{1}^{\floor{\frac{(v+\delta') M}{\theta^j }}}$ into $\mathbf{o}_j=\I|_{1}^{\floor{\frac{v M}{\theta^j }}}$ and $\R_j=\I|_{\floor{\frac{v M}{\theta^j }}+1}^{\floor{\frac{(v+\delta') M}{\theta^j }}}$. For $M$ large enough, we have $p^*<p$, and define $\mathbf{u}:= \I|_{1}^{\floor{\frac{v M}{\theta^p}}}\dots \I|_{1}^{\floor{\frac{v M}{\theta^{p^*}}}}$. By applying (\ref{6.3}), we have
\begin{align}\label{eq:59}
    |\Omega(\T,v+\delta',\theta)- \Omega(\T,v,\theta)| =-\limsup_{M\to\infty} \frac { \log\|f'_{\mathbf{u}}\|+ \sum_{j=-1}^{p^*} \log\|f'_{\R_j}\|}{M} .
\end{align}
Notice that $\floor{\frac{(v+\delta') M}{\theta^{p^*} } }-\floor{\frac{v M}{\theta^{p^*} }}\geq \frac{(v+\delta') M}{\theta^{p^*} } -\frac{v M}{\theta^{p^*} }-1
\geq \frac{\delta'M}{\theta^*} -1$. If $\frac{\delta'M}{p^*}>\theta+1$, $p^*$ is not the largest integer such that $\floor{\frac{(v+\delta') M}{\theta^{p^*} } }-\floor{\frac{v M}{\theta^{p^*} }}\geq 1$, which contradicts its definition. Therefore, $\frac{\delta'M}{p^*}\leq \theta+1$. Together with $\floor{\frac{(v+\delta') M}{\theta^{p^*} } }-\floor{\frac{v M}{\theta^{p^*} }}\leq \frac{\delta'M}{\theta^*} +1$, we have
\begin{align}
    \floor{\frac{(v+\delta') M}{\theta^{p^*} } }-\floor{\frac{v M}{\theta^{p^*} }}\leq \theta+2.
\end{align}
Meanwhile, notice that by $p^*<p$ and the definition of $p$, we get
\begin{align}
    \floor{\frac{(v+\delta') M}{\theta^{p^*} } }\leq \floor{\frac{v M}{\theta^{p^*} }}+\theta+2 \leq 2\theta+2,
\end{align}
which implies 
\begin{align}\label{eq:60}
    -\limsup_{M\to\infty} \frac{\log\|f'_\mathbf{u}\|}{M} \leq \lim_{M\to\infty}\frac{ (2\theta+2)(\frac{1}{\delta'}+\frac{1}{v})(\sum_{i=0}^{\infty}\frac{1}{\theta^i})}{M}\log\frac{1}{\|f'_{\min}\|}=0.
\end{align}
As $|\R_j|\leq \frac{\delta' M}{\theta^j}+2$, we get $\sum_{j=-1}^{p^*}|\R_j|\leq \frac{\theta}{\theta-1}\delta'M+2p^*$, thus
\begin{align}\label{eq:61}
   -\limsup_{M\to\infty} \frac{\sum_{j=-1}^{p^*} \log\|f'_{\R_j}\|}{M}\leq \lim_{M\to\infty} \frac{\frac{\theta}{\theta-1}\delta'M+2p^*}{M}\log\frac{1}{\|f'_{\min}\|} \leq \frac{\theta\delta'}{\theta-1}.
\end{align}
Substituting estimates (\ref{eq:60}) and (\ref{eq:61}) into (\ref{eq:59}), we get the desired right continuity, and left continuity can be obtained by similar method.

We now want to check the continuity of $h(\T,v)$. The strategy is to show that the $\theta_1,\theta_2\in (\frac{1}{1-v},\infty)$ attaining the extreme of $g(\theta,v,\T)$ and $g(\theta,v',\T)$ respectively are close when $v$ and $v'$ are close. By the continuity of $g(\theta,v,\T)$, for $\delta> 0$, $v\in (0,1)$ and $v'\in (v-\delta,v+\delta)\subset(0,1)$, we have 
\begin{align}\label{eq:62}
   \inf_{\theta\in (\frac{1}{1-v^+},+\infty)} g(\theta,v,\T)-\inf_{\theta\in (\frac{1}{1-v^+},+\infty)} g(\theta,v',\T)=O(\delta) 
\end{align}
where $v^+=\max\{v,v'\}$. We now check that both $h(\T,v)$ and $h(\T,v')$ can be obtained by $g(\theta,v,\T)$ and $g(\theta,v',\T)$ respectively for $\theta\in (\frac{1}{1-v^+},\infty)$. We substitute $\theta=\frac{2}{1-v}$ into $g(\theta,v,\T)$, then
\[g(\theta,v,\T)\leq\frac{4v}{(1-v)^2}\log \frac{1}{\|f'_{\min}\|}.\] 
Let $v^-=\min\{v,v'\}$. If $\theta\in (\frac{1}{1-v^-},\frac{1}{1-v^+})$, we have $g(\theta,v,\T)\geq \frac{v}{\delta}\log\frac{1}{\|f'_{\max}\|}\geq g(\theta,v,\T)\geq\frac{4v}{(1-v)^2}\log \frac{1}{\|f'_{\min}\|}> h(\T,v)$ which implies $g(\theta,v,\T)$ cannot approach $h(\T,v)$ for $\theta\notin (\frac{1}{1-v^+},\infty)$. Thus, both $g(\theta,v,\T)$ and $g(\theta,v',\T)$ attain their infimums at the same $\theta\in (\frac{1}{1-v^+})$.
Therefore, (\ref{eq:62}) implies the continuity of $h(\T,v)$ for $v\in (0,1)$. 

The continuity of $s^+(\T,v)$ follows easily from the continuity of $h(\T,v)$. The continuity will remain if we take maximum of two two continuous functions, so $\omega^+(\T,v)$ is continuous. It is not hard to notice that $\lim_{v\to 0^+} \omega^+(\T,v)= \dimH{\Lambda}$ and   $\lim_{v\to 1^-} \omega^+(\T,v)= 0$, which completes the proof.
\end{proof} 
By continuity of $w^+(\T,v)$ and $w^-(\T,v)$ for $v\in(0,1)\subset [0,1]$ and equation $\ref{sandwich}$, for a sequence $(c_n)_n$ such that $\lim_{n\to\infty} c_n= v $,
\begin{align}
    s^-(\T,v)\leq\dimH{\pi(R_e((c_n)_n,n),\T))}\leq s^+(\T,v).
\end{align}
That finishes the proof of Theorem 1 for $v\in(0,1)$. We finish the proof of Theorem 1 by checking the remaining values of $v$, namely $v=0,1$.
\begin{proof}[Proof of Case 1 in Theorem 1]
For a given $\T\in G$, $\epsilon>0$ and any sequence of natural numbers $(a_n)_n$ with $\lim_{n\to\infty} \frac{a_n}{n}=0$, by Lemma \ref{lemma: 5.1} and Lemma \ref{Lemma:5.3}, we have 
\[\dimH{\pi(R_e((a_n)_n,\T)) } \geq \hat{s}^-(\T,\epsilon),\]
thus 
\[\dimH{\pi(R_e((a_n)_n,\T) )} \geq \lim_{\epsilon\to 0} \hat{s}^-(\T,\epsilon)= \omega^-(\T,0). \]
Since $\pi(R_e((a_n)_n,\T))\subset \Lambda$, we have
\[\dimH{\pi(R_e((a_n)_n,\T)) } \leq \dimH{\Lambda}= \omega^+(\T,0)=\omega^-(\T,0) .\]
The argument for $\lim_{n\to\infty} \frac{a_n}{n}=1$ is similar, we omit it here.
\end{proof}

\section{Applications and explicit results}

Theorem 1 gives lower and upper bounds that are not sharp. However, the specific condition $\Omega^+{(\T,\theta,v)}=\Omega^-{(\T,\theta,v)}$ mentioned in Theorem 1 is satisfied in many cases.

\begin{corollary}
 Let $\Lambda$ be a self-similar set satisfying open set condition, and $\T \in G$ be periodic. Then the specific condition $\Omega^+{(\T,\theta,v)}=\Omega^-{(\T,\theta,v)}$ in Theorem 1 is satisfied.
\end{corollary}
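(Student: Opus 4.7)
The plan is to prove that $M^{-1}\log\|f'_{G(M,\T,\theta,v)}\|$ actually converges as $M\to\infty$, which immediately forces $\Omega^+(\T,\theta,v)=\Omega^-(\T,\theta,v)$. Two features of the self-similar, periodic setting make this very clean: first, in a self-similar IFS the contractions $r_i:=|f'_i(x)|$ are constants, so inequality (\ref{6.3}) is an equality with $K=1$ and $\log\|f'_\I\|=\sum_{k}\log r_{i_k}$ is strictly additive over concatenation; second, periodicity of $\T$ makes $n\mapsto\log\|f'_{\T|_1^n}\|$ affine in $n$ up to a uniformly bounded remainder.

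More precisely, denoting the period of $\T$ by $q$ and $\alpha:=\frac{1}{q}\sum_{k=1}^{q}\log r_{\T_k}$, the decomposition $n=aq+b$ with $0\le b<q$ yields
\[\log\|f'_{\T|_1^n}\|=n\alpha+E(n),\qquad |E(n)|\le q\,\log(1/\|f'_{\min}\|)\]
uniformly in $n$. Applying this to each of the $p+2$ blocks of $G(M,\T,\theta,v)=\T|_1^{\floor{vM/\theta^p}}\cdots\T|_1^{\floor{\theta vM}}$, one obtains
\[\log\|f'_{G(M,\T,\theta,v)}\|=\alpha\sum_{j=-1}^{p}\floor{vM/\theta^j}+R_M=\alpha vM\sum_{j=-1}^{p}\theta^{-j}+R'_M,\]
where $|R'_M|\le(p+2)\bigl(q\log(1/\|f'_{\min}\|)+|\alpha|\bigr)$ absorbs both the floor-truncation errors and the per-block remainders $E(\cdot)$.

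The geometric sum evaluates to $\theta^2(1-\theta^{-(p+2)})/(\theta-1)$, and the definition $p=\max\{p\in\mathbb{N}:vM/\theta^p\ge 1\}$ gives $p=\floor{\log_\theta(vM)}$, so $p+2\le c\log M$ for some constant $c=c(\theta,v)$. Hence $|R'_M|/M\to 0$ and $\theta^{-(p+2)}\to 0$, and dividing through by $M$ yields
\[\lim_{M\to\infty}\frac{\log\|f'_{G(M,\T,\theta,v)}\|}{M}=\frac{\alpha v\,\theta^2}{\theta-1},\]
which is the common value of $\Omega^+(\T,\theta,v)$ and $\Omega^-(\T,\theta,v)$. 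The only real obstacle is the bookkeeping of the floor and end-of-period errors; once one notices that the number of blocks grows only logarithmically in $M$, everything reduces to a standard geometric-series convergence.
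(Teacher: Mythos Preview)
Your argument is correct. The paper's own proof is a single sentence invoking the Birkhoff Ergodic Theorem, whereas you give an explicit elementary computation. The underlying idea is the same: in a self-similar IFS the potential $\log\|f'_\I\|$ is strictly additive, and periodicity of $\T$ forces $\frac{1}{n}\log\|f'_{\T|_1^n}\|\to\alpha$ (which is exactly what Birkhoff says for a periodic orbit, and is trivial in that case). Your version has the advantage of being self-contained, making transparent why the logarithmic number $p+2$ of blocks kills the error terms, and even producing the explicit limit $\alpha v\theta^2/(\theta-1)$; the paper's one-line appeal to Birkhoff leaves all of this implicit. One cosmetic point: your bound $|E(n)|\le q\log(1/\|f'_{\min}\|)$ is not quite sharp as written (the honest bound picks up a harmless factor of~$2$), but uniform boundedness is all that matters and that is clear.
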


\begin{proof}
     This lemma can be proved by directly applying Birkhoff Ergodic theorem to our setup.   
\end{proof}

\begin{corollary}
 Let $\Lambda$ be a self-conformal set with open set condition, and $\T$ be periodic with period $1$. The specific condition $\Omega^+{(\T,\theta,v)}=\Omega^-{(\T,\theta,v)}$ in Theorem 1 is satisfied.
\end{corollary}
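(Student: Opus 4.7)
The plan is to exploit the period-$1$ hypothesis to collapse the target word structure to powers of a single map, so that the existence of the limit defining $\Omega^+=\Omega^-$ reduces to Fekete's subadditive lemma.

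First I would write $\T = aaa\cdots$ for some $a \in \Sigma$, so that $\T|_1^k = a^k$ for every $k \in \mathbb{N}$. Recalling $p = \max\{p\in\mathbb{N}: vM/\theta^p \ge 1\}$ from Definition \ref{def}, the word
\[
G(M,\T,\theta,v) = \T|_1^{\lfloor vM/\theta^p\rfloor}\cdots\T|_1^{\lfloor vM\rfloor}\T|_1^{\lfloor\theta vM\rfloor}
\]
is then just $a^{N_M}$, where
\[
N_M := \sum_{j=-1}^{p}\Big\lfloor \frac{vM}{\theta^j}\Big\rfloor.
\]
Because each floor introduces error at most $1$ and $p+2 = O(\log M)$, a direct geometric-series computation yields
\[
\lim_{M\to\infty} \frac{N_M}{M} \;=\; v\sum_{j=-1}^{\infty}\theta^{-j} \;=\; \frac{v\theta^2}{\theta-1}.
\]

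Next I would verify that the sequence $a_n := \log \|f'_{a^n}\|$ is subadditive and bounded below. The upper inequality in (\ref{6.3}) gives $\|f'_{a^{m+n}}\| \le \|f'_{a^m}\|\|f'_{a^n}\|$, i.e.\ $a_{m+n} \le a_m + a_n$. Iterating the lower inequality in (\ref{6.3}) yields $\|f'_{a^n}\| \ge K^{-2(n-1)}\|f'_a\|^n$, so $a_n/n \ge -2\log K + \log\|f'_a\| + O(1/n)$ is bounded below by a finite constant. Fekete's lemma therefore produces a finite limit $L := \lim_{n\to\infty} a_n/n = \inf_{n} a_n/n$.

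Finally I would combine the two ingredients. Since $N_M\to\infty$, the sequence $a_{N_M}/N_M$ is a subsequence of $a_n/n$ and hence also converges to $L$. Writing
\[
\frac{\log\|f'_{G(M,\T,\theta,v)}\|}{M} \;=\; \frac{N_M}{M}\cdot\frac{\log\|f'_{a^{N_M}}\|}{N_M},
\]
and letting $M\to\infty$, both the limsup and liminf on the left coincide and equal $v\theta^2 L/(\theta-1)$; that is, $\Omega^+(\T,\theta,v) = \Omega^-(\T,\theta,v)$. The argument is essentially a one-line application of Fekete once the period-$1$ reduction is made, and the only conceptual point is recognizing that the inequalities in (\ref{6.3}) give precisely the submultiplicativity plus $K^{-2}$-approximate supermultiplicativity that the subadditive framework requires; no ergodic-theoretic input is needed because the orbit under consideration is trivial (a single fixed-point orbit of the map $f_a$).
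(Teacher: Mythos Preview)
Your argument is correct and is essentially the approach the paper has in mind; the paper's own proof consists of the single sentence ``This lemma can be obtained by definition,'' so you have in fact supplied the details that the paper omits. The only minor imprecision is calling $(a_{N_M}/N_M)_M$ a \emph{subsequence} of $(a_n/n)_n$ --- $N_M$ need not be strictly increasing --- but since $a_n/n\to L$ and $N_M\to\infty$, the conclusion $a_{N_M}/N_M\to L$ is of course unaffected.
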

\begin{proof}
    This lemma can be obtained by definition.
\end{proof}

In these cases, the Hausdorff dimension of the set of eventually always hitting points can be illustrated by Figure \ref{fig:intersection-of-curves2}. We can notice from the picture that the $s$-coordinate of the intersection is our best candidate of the Hausdorff dimension of the eventually always hitting set. It only fails to be the correct Hausdorff dimension when its $s$-coordinate is larger than the topological dimension of the domain of the IFS. In that case, the Hausdorff dimension of the eventually always hitting set is equal to the Hausdorff dimension of the attractor and the topological dimension of the domain of the IFS. 
\begin{figure}[htbp]
  \centering
  \begin{tikzpicture}
    \begin{axis}[
      width=0.5\textwidth,
      xlabel={$s$},
      ylabel={$y$},
      legend pos=north west,
      xmin=0, xmax=2,
      ymin=0, ymax=2,
      xtick=\empty, % Hide x-axis ticks
      ytick=\empty, % Hide y-axis ticks
      ylabel={},
    ]
      % Plot the first curve (e.g., y = x^2)
      \addplot[blue,mark=none, domain=0:2, samples=200, name path=curveA] {2-(5/3)*x^(1/3)};

      % Plot the second curve (e.g., y = 2x)
      \addplot[red,mark=none, domain=-2:2, samples=100, name path=curveB] {2*x};

      \node[blue, right, yshift=2mm] at (axis cs:0.1,1.2) {$P(s)$}; 
      \node[red, right, yshift=2mm] at (axis cs:0.4,1.55) {$h(\T,v)s$}; 
      % Mark the intersection points
      \draw [name intersections={of=curveA and curveB, by={intersection1}}]
        [fill=black] (intersection1) circle (2pt);
     \draw[dashed] (intersection1) -- (intersection1|-{axis cs:0,0});
    \end{axis}
    \node[below, xshift=-18mm, yshift=0mm] at (current axis.south) {$\hat{s}(\T,v)$};
  \end{tikzpicture}
  \caption{Illustration of ${\dimH{\pi(R_e((a_n)_n,\T)) }}$.}
  \label{fig:intersection-of-curves2}
\end{figure}

\begin{remark}
  We can see that the bounds to the Hausdorff dimension of the eventually always hitting points in Theorem 1 are not always sharp. For example, we can take a self-similar IFS $\{f_1,f_2,f_3\}$ satisfying OSC such that $f_1$ and $f_2$ have different contraction ratio and our target is cylinders along a symbolic sequence $20011110000000000000000\cdots$ where the sequence is constructed such that after the first digit $3$, we always have $2^{2^n}$ $0$s followed by $2^{2^{n+1}}$ $1$s, etc. Our conjecture is that there always exists an increasing sequence of natural numbers $\{c_n(\T)\}_n$ dictated by the symbolic structure of the target $\T\in G$ such that $\lim_{n\to \infty}\frac{\log\|f'_{G(c_n,\T,\theta,v)}\|}{c_n}=\Omega^+{(\T,\theta,v)}=\Omega^-{(\T,\theta,v)}$ exists. If this were the case, we could find the Hausdorff dimension of the eventually always hitting points using similar proofs. However, at this point, we do not know how to construct $(c_n(\T)_n)$ that satisfies this property.  
\end{remark}

\begin{remark}
    Rigorously speaking, Theorem 1 cannot directly imply the result in \cite{bugeaud}. The reason is that, they are considering eventually always hitting set with multiple targets $\T_1,\T_2$ such that $\pi(\T_1)=\pi(\T_2)$. Generalising this conclusion is not easy, as their method relies on the equal contraction ratio of the IFS. We tried to generalize our result to multiple targets' scenario, the difficulty is that it is hard to find alternative $\Omega^+(\T,\theta,v)$ and $\Omega^-(\T,\theta,v)$ functioning well. However, the current work indeed generalize the Cantor type set results in the end of \cite{bugeaud} and the symbolical problems hidden beneath \cite{bugeaud}.
\end{remark}

\section*{Acknowledgments}
The author would like to express his gratitude to Henna Koivusalo and Sascha Troscheit for the proofreading and suggestions.
\newpage

\printbibliography %Prints bibliography

\end{document}